\newtheorem{theorem}{Theorem}[section]
\newtheorem{lemma}[theorem]{Lemma}
\newtheorem{proposition}[theorem]{Proposition}
\newtheorem{corollary}[theorem]{Corollary}
\theoremstyle{definition}
\theoremstyle{remark}
\numberwithin{equation}{section}
\begin{document}

\title[Big Hankel operators on the Hardy sapce]
{Big Hankel operators on Hardy spaces of strongly pseudoconvex domains}
\author[B.-Y. Chen]
{Bo-Yong Chen}
\address{
Bo-Yong Chen\\
School of Mathematical Sciences, Fudan University, Shanghai, 200433, China}
\email{boychen@fudan.edu.cn}

\author[L. Jiang]
{Liangying Jiang}
\address{
Liangying Jiang \\ Department of Statistics and  Mathematics,
Shanghai Lixin  University of Accounting and Finance,
Shanghai 201209, China}
\email{liangying1231@163.com}

\keywords{Hankel operator, Hardy space,  strongly pseudoconvex domain}

\begin{abstract}
In this article, we investigate the (big) Hankel operators $H_f$ on  Hardy spaces of strongly pseudoconvex domains with smooth boundaries in $\mathbb{C}^n$.  We also  give a necessary and sufficient condition for  boundedness of the Hankel operator $H_f$ on the Hardy space of the unit disc, which is new in the setting of one variable.
\end{abstract}

\maketitle

\section{Introduction}\label{Sec1}

Let $\Omega=\{z\in \mathbb{C}^n: \rho(z)<0\}$ be a strongly pseudoconvex domain with smooth boundary, where $\rho(z)\in C^\infty(\mathbb{C}^n)$ is a strictly plurisubharmonic defining function of $\Omega$, with $d\rho\ne 0$ in some neighborhood of $\partial \Omega$.

Let $d\sigma$ denote the surface measure on  $\partial \Omega$ induced by the Lebesgue measure of $\mathbb C^n$. For $0<p<\infty$, we denote $L^p(\partial\Omega)$ the usual $L^p$  space on  $\partial \Omega$ with respect to the measure $d\sigma$. Let $H^p(\Omega)$ denote the Hardy space of holomorphic functions on $\Omega$, whose norm is given by
$$\|f\|^p_{H^p}:=\sup\limits_{\varepsilon>0}\int_{\partial \Omega_\varepsilon}|f|^pd\sigma_\varepsilon,$$
where $\Omega_\varepsilon=\{z\in \mathbb{C}^n: \rho(z)<-\varepsilon\}$ and $d\sigma_\varepsilon$ is the surface measure on $\partial \Omega_\varepsilon$.
It is well-know  that any $f\in H^p(\Omega)$ has a radial limit at almost all points on $\partial \Omega$ (cf. \cite{Ste72}). Thus one can identify $H^p(\Omega)$ as a closed subspace of  $L^p(\partial\Omega)$. Let $H^\infty(\Omega)$ denote the space of bounded holomorphic functions in $\Omega$.

Let $P: L^2(\partial\Omega)\rightarrow H^2(\Omega)$ be the orthogonal projection via the Szeg\"{o} kernel $S(z, w)$, that is $$Pg(z)= \int_{\partial \Omega}S(z, \zeta) g(\zeta)d\sigma(\zeta)$$
for $g\in L^2(\partial\Omega)$. We may regard this operator as a singular integral operator on $\partial \Omega$, for   $S(z, w)$ is $C^\infty$ on $\partial\Omega\times \partial\Omega\setminus\Delta$ (where $\Delta$ is the diagonal of $\partial\Omega\times \partial\Omega$). We also define the Poisson-Szeg\"{o} kernel by $P(z, w)=S(z,z)^{-1}|S(z,w)|^2$ for $z\in \Omega$ and $w\in \partial \Omega$ (see \cite{Ste72}).

For $f\in L^2(\partial\Omega)$, the (big) Hankel operator $H_f$ with symbol $f$ is the operator from $H^2(\Omega)$ into $(H^2(\Omega))^\bot$ defined by
$$H_f g =(I-P)(fg), \ \ g\in H^2(\Omega).$$
We may identify $H_f$ with the operator $(I-P)M_f P$ on the Hilbert space $L^2(\partial\Omega)$, where $M_f$ is the multiplication operator given by $(M_f g)(z)=f(z)g(z)$, $z\in \overline{\Omega}$. If $f$ is bounded on $\partial\Omega$, then clearly $H_f$ is bounded with $\|H_f\|\le \|f\|_\infty$. In general, $H_f$ is only densely defined, whose domain contains $H^\infty(\Omega)$.

This operator is a generalization of the Hankel operator in the case of the unit  disc $D$, which is different from the (small) Hankel operator $h_f$ defined by $h_f g=\overline{P(fg)}:H^2(\Omega)\rightarrow \overline{H_0^2(\Omega)}$, where $\overline{H_0^2(\Omega)}$ is the space of complex conjugate of functions in $H^2(\Omega)$ which are zero at the origin of $\mathbb{C}^n$. The theory of Hankel operators of the unit disc is classical. There are several books dealing with Hankel operators on the Hardy space $H^2(D)$, see \cite{Par88}, \cite{Pel03}, \cite{Pow82} and \cite{Zhu07} for details.

For the unit disc, the characterization of bounded Hankel operators on the Hardy space is due to Nehari \cite{Neh57}. The compactness is given by Hartman  \cite{Har58}. Schatten class Hankel operators  is characterized  by Peller \cite{Pel82}, \cite{Pel85} and Semmes \cite{Sem84}.

The extension of these results to  higher dimensional domains for (big) Hankel operators is proved to be resistant in the case of the unit ball $B_n$. A fundamental result due to Coifman, Rochberg and Weiss \cite{CRW76}  asserts that $[M_f, P]=M_f P-PM_f$ is bounded on $L^2(\partial B_n)$ for $f\in BMO$, and $[M_f, P]$ is compact if $f\in VMO$. The commutator result implies that $H_f=[M_f, P] P$ is bounded on $L^2(\partial B_n)$ for every $f\in BMO$ and $H_f$ is compact if $f\in VMO$. In fact, by virtue of the relation $$[M_f, P]=H_f-H_{\overline{f}}^*,$$ the study of the commutator $[M_f, P]$ is equivalent to the problem concerning both $H_f$ and $H_{\overline{f}}$.

The study of $H_f$ alone is usually more difficult than  corresponding problems concerning $H_f$ and $H_{\overline{f}}$ simultaneously. Since $H_f=H_{f-Pf}$, properties of the  Hankel operator $H_f$ for general $f\in L^2(\partial\Omega)$ are recovered by  properties of $f-Pf$.

For $f\in L^2(\partial B_n)$, it was showed in \cite{Xia08} that $H_f$ is bounded on the Hardy space $H^2(B_n)$ if and only if $f-Pf\in BMO$, and $H_f$ is compact if and only if $f-Pf\in VMO$. Moreover, in \cite{FX09}, the authors proved that $H_f$ belongs to the Schatten class $S_p$, $2n<p<\infty$, if and only if $f$ lies  in the Besov space $B_p$. Other characterizations for  the Hankel operator $H_f$ on the Hardy space $H^2(B_n)$  were given in  \cite{FR90}, \cite{Xia04} and \cite{Zhe97}.

In particular,  Amar   \cite{Amar95} established links between the Hankel operator $H_f$ on the Hardy space  $H^2(B_n)$ and  the $\overline{\partial}$ of a Stokes extension of $f$ in $B_n$, which is also the motivation of our work. In addition, we mention the work of  Li and Luecking (see [20-22]  and \cite{Lue92}), where  the theory of  $\overline{\partial}$-operator is used to give  complete characterizations of  boundedness, compactness and Schatten ideals of Hankel operators acting  on  Bergman spaces of strongly pseudoconvex domains.

In this article, we try to give some characterizations for  boundedness of  Hankel operators on the Hardy space $H^2(\Omega)$, based on the theory of $BMO$, $VMO$, and  $\overline{\partial}$-operator,
when $\Omega$ is a bounded strongly pseudoconvex domain  in $\mathbb{C}^n$. Our main result is the following.

\begin{theorem}\label{thm 1.1}
Let  $\Omega$ be a bounded  strongly pseudoconvex domain with smooth boundary and $f\in L^2(\partial\Omega)$. If $f\in BMO(\partial\Omega)$, then the Hankel operator $H_f$ is bounded from $H^2(\Omega)$ into $L^2(\partial\Omega)$.
\end{theorem}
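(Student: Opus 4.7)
The plan is to reduce the boundedness of $H_f$ to a commutator estimate in the spirit of Coifman--Rochberg--Weiss, adapted to the non-isotropic geometry of $\partial\Omega$. Since $P^2=P$, a direct computation gives
$$H_f = (I-P)M_fP = M_fP-PM_fP = [M_f,P]\,P,$$
so that $\|H_f g\|_{L^2(\partial\Omega)} \le \|[M_f,P]\|_{L^2\to L^2}\,\|g\|_{L^2(\partial\Omega)}$ for every $g\in H^2(\Omega)$. Thus it suffices to prove
$$\|[M_f,P]\|_{L^2(\partial\Omega)\to L^2(\partial\Omega)} \le C\,\|f\|_{BMO(\partial\Omega)},$$
where $BMO(\partial\Omega)$ is understood relative to the non-isotropic (Kor\'anyi) pseudo-balls on $\partial\Omega$.

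I would then invoke the Fefferman / Boutet de Monvel--Sj\"ostrand asymptotic description of the Szeg\"o kernel on a smoothly bounded strongly pseudoconvex domain. Off the diagonal $S(z,w)\in C^\infty(\partial\Omega\times\partial\Omega\setminus\Delta)$, as already noted in the paper, and the standard size and smoothness estimates on $S$ show that $P$ is a Calder\'on--Zygmund-type singular integral operator on the space of homogeneous type $(\partial\Omega,d_K,d\sigma)$, where $d_K$ is the non-isotropic pseudo-metric. Once this structural fact is in hand, I would apply the extension of the Coifman--Rochberg--Weiss commutator theorem to spaces of homogeneous type (as developed by Bramanti--Cerutti, Krantz--Li, and others), which yields precisely the bound $\|[M_f,P]\|_{L^2\to L^2}\le C\|f\|_{BMO(\partial\Omega)}$ and, combined with the identity in the first paragraph, finishes the proof.

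The main obstacle is verifying the Calder\'on--Zygmund character of $P$ on $(\partial\Omega,d_K,d\sigma)$: one needs a pointwise size bound of the form $|S(z,w)|\le C/\sigma(B_K(z,d_K(z,w)))$ together with the appropriate H\"older-type smoothness estimates in each variable off the diagonal. On the unit ball these follow from the explicit formula for the Szeg\"o kernel, but on a general strongly pseudoconvex domain they require Fefferman's parametrix for $S(z,w)$ and the doubling of $d\sigma$ with respect to $d_K$. Once those kernel estimates are established (or cited), the commutator theorem applies as a black box and Theorem~\ref{thm 1.1} follows; the rest of the argument is a routine unpacking of the reduction $H_f = [M_f,P]P$.
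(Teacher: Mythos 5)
Your argument is correct in outline, but it proves the theorem by a genuinely different route from the paper. The paper does not go through the commutator at all: it takes the Varopoulos decomposition $f=f_1+f_2$ with $f_1\in L^\infty(\partial\Omega)$ and $f_2$ admitting an extension $\widetilde{f_2}$ whose nonisotropic gradient measure $|D\widetilde{f_2}|\,dV$ is Carleson, observes (via Proposition 2.3) that for $g\in H^\infty(\Omega)$ the function $H_{f_2}g$ is the $L^2(\partial\Omega)$-minimal solution of $\overline{\partial}_b u=g\cdot\overline{\partial}\widetilde{f_2}$, and then invokes the Amar--Bonami estimate (Theorem A) together with the interpolation inequality (2.2) for the spaces $W^\alpha$ to bound that solution by $\|g\|_{H^2}$. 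Your route instead writes $H_f=[M_f,P]P$ and appeals to the Calder\'on--Zygmund character of the Szeg\"o projection on the space of homogeneous type $(\partial\Omega,d_K,d\sigma)$ plus the Coifman--Rochberg--Weiss commutator theorem in that setting. This is viable: the size bound $|S(z,w)|\lesssim \sigma(B_K(z,d_K(z,w)))^{-1}$ and the H\"older smoothness off the diagonal do follow from the Fefferman / Boutet de Monvel--Sj\"ostrand parametrix (one must handle the logarithmic term and check the doubling of $\sigma$, but this is classical, cf.\ Phong--Stein and Krantz--Li), and the homogeneous-type CRW theorem (Bramanti--Cerutti, Krantz--Li) then gives $\|[M_f,P]\|\lesssim\|f\|_{BMO}$; since $P$ is an orthogonal projection the $L^2$-boundedness hypothesis is free, and Hermitian symmetry of $S$ gives regularity in both variables. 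Be aware, though, that the entire analytic content of your proof is outsourced to those two cited inputs, whereas the paper's proof is self-contained modulo Varopoulos and Amar--Bonami. What each approach buys: yours is shorter and actually proves the stronger statement that the full commutator $[M_f,P]$ is bounded, hence that $H_{\overline{f}}$ is bounded as well (which the paper only recovers in Section 6 under an extra hypothesis on $1/S(\cdot,z)$); the paper's $\overline{\partial}$-method, by contrast, isolates the one-sided condition on $\overline{\partial}\widetilde{f_2}$ that is conjectured to characterize boundedness of $H_f$ alone, transfers with no extra work to $H^p$ ($p>1$), to compactness via vanishing Carleson measures, and to more general pseudoconvex domains in Section 4, and produces the solution of Gleason's problem for $H^p(\Omega)$ as a by-product.
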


The idea  for proving this result comes from Amar's $\overline{\partial}$-method and the characterization of $BMO$ given by Varopoulos \cite{Var77}. This method is different from  the proofs of corresponding results in the setting of the unit disc and the unit ball. Particularly, in the proof of Theorem 1.1, we get   a solution to  Gleason's  problem on the Hardy space $H^p(\Omega)$ for $1\le p\le\infty$.

In fact, the proof of Theorem 1.1 shows that $H_f$ is bounded on $H^2(\Omega)$ if  $f$  admits a decomposition $f=f_1+f_2$ with $f_1\in L^\infty(\partial\Omega)$ and $f_2$  having  an extension $\widetilde{f_2}$  in $\Omega$, such that
 $|\overline{\partial}\widetilde{f_2}|dV$ and $\frac{1}{\sqrt{-\rho}}|\overline{\partial}\widetilde{f_2}\wedge\overline{\partial}\rho|dV$ are  Carleson measures in $\Omega$, where $dV$ is the Lebesgue measure on  $\Omega$. We conjecture that this condition is also necessary.

Based on this observation, we  give  the following $\overline{\partial}$-characterization  for Hankel operators on the Hardy space $H^2(D)$ of the unit disc.

\begin{theorem}\label{thm 1.2} If $f\in L^2(\partial D)$, then the following properties are equivalent.
\\ (1) $H_f$ is bounded from $H^2(D)$ into $L^2(\partial D)$.
\\ (2) The function $$F(z)=\inf\limits_{h\in H^2(D)}\frac{1}{2\pi}\int|f-h|^2P_z(\theta)d\theta \eqno(1.1) $$

is bounded.
\\ (3) The measure $|\frac{\partial f}{\partial \overline{z}}|^2\log\frac{1}{|z|}dxdy$ is a Carleson measure, where $f(z)=\int f(\theta)P_z(\theta)d\theta$

is the Poisson integral of $f$.
\end{theorem}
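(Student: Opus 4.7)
The plan is to prove the cycle $(1)\Rightarrow(2)\Rightarrow(3)\Rightarrow(1)$.

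For $(1)\Rightarrow(2)$, I would test $H_f$ against the normalized Szeg\H{o} kernel $k_z/\|k_z\|$ with $k_z(\zeta)=(1-\bar z\zeta)^{-1}$. Since both $k_z$ and $k_z^{-1}=1-\bar z\zeta$ lie in $H^\infty(D)$, multiplication by $k_z$ is an isomorphism of $H^2(D)$, and $|k_z|^2/\|k_z\|^2=P_z$ on $\partial D$. Combined with the variational identity $\|H_f g\|_{L^2}^2=\inf_{h\in H^2}\|fg-h\|_{L^2}^2$ (orthogonality of $I-P$), a substitution $h=k_z h'$ yields $\|H_f(k_z/\|k_z\|)\|^2=F(z)$, so $\sup_z F(z)\le\|H_f\|^2$.

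For $(2)\Leftrightarrow(3)$, let $u$ denote the Poisson extension of $f$, decomposed as $u=u_++u_-$ into its holomorphic and anti-holomorphic parts. Since $u$ is harmonic and $h\in H^2$ is holomorphic, $\Delta|u-h|^2=4(|u_{\bar w}|^2+|u_w-h_w|^2)$, and Green's identity on $D$ gives
\[
\tfrac{1}{2\pi}\!\int_{\partial D}\!|f-h|^2 P_z\,d\theta=|u(z)-h(z)|^2+4\!\int_D\!\bigl(|u_{\bar w}|^2+|u_w-h_w|^2\bigr)G(z,w)\,dA,
\]
where $G$ is the Green's function of $D$. Choosing $h(w)=u_+(w)+u_-(z)$ simultaneously kills $|u(z)-h(z)|^2$ and $|u_w-h_w|^2$, so $F(z)=4\int_D|u_{\bar w}|^2 G(z,w)\,dA$. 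The M\"obius invariance $G(z,\sigma_z(\eta))=(2\pi)^{-1}\log(1/|\eta|)$ recasts this as $F(z)=(2/\pi)\int_D|(u\circ\sigma_z)_{\bar\eta}|^2\log(1/|\eta|)\,dA(\eta)$, which by Littlewood-Paley is comparable to the Garsia seminorm squared of $u_-$ at $z$. Uniform boundedness in $z$ is then equivalent to $u_-\in BMO(\partial D)$, which by the Fefferman-Stein characterization is equivalent to $|u_{\bar w}|^2\log(1/|w|)\,dA$ being a Carleson measure on $D$.

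For $(3)\Rightarrow(1)$, I would use the $\overline\partial$-technique of Theorem~\ref{thm 1.1}. By duality it suffices to bound $\langle H_f g,\bar\eta\rangle=\int fg\eta\,d\theta/(2\pi)$ for $g,\eta\in H^2$ with $\eta(0)=0$. Applying Green's identity at $z=0$ to $ug\eta$ (with $\Delta(ug\eta)=4u_{\bar w}(g\eta)_w$, since $u$ is harmonic and $g\eta$ holomorphic) converts this pairing into $4\int_D u_{\bar w}(g\eta)_w G(0,w)\,dA$. Expanding $(g\eta)_w=g'\eta+g\eta'$ and applying Cauchy-Schwarz, each piece factors into a Carleson-controlled term $\int|u_{\bar w}|^2|\eta|^2\log(1/|w|)\,dA\lesssim\|\eta\|_{H^2}^2$ and a Littlewood-Paley term $\int|g'|^2\log(1/|w|)\,dA\asymp\|g\|_{H^2}^2$ (symmetrically for the other piece). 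This yields $|\langle H_f g,\bar\eta\rangle|\lesssim\|g\|_{H^2}\|\eta\|_{H^2}$, proving $H_f$ is bounded. The principal obstacle is the sharpness in Step 2: the exact identity $F(z)=4\int|u_{\bar w}|^2 G(z,w)\,dA$ relies on the explicit choice $h=u_++u_-(z)$ that exploits the one-variable splitting $u=u_++u_-$---a mechanism unavailable in several complex variables, where only the $(3)\Rightarrow(1)$ direction via the $\overline\partial$-method survives (and the full equivalence remains conjectural, as noted after Theorem~\ref{thm 1.1}).
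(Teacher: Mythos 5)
Your proposal is correct, and on $(1)\Rightarrow(2)$ and $(2)\Leftrightarrow(3)$ it is essentially the paper's argument in different packaging: the paper also tests $H_f$ on the normalized reproducing kernel (proving only $F(z)\le\|H_f k_z\|_2^2$, where your isomorphism remark upgrades this to equality), and its minimizer $h_1$ in $L^2(P_{z_0}(\theta)d\theta)$ --- for which $f-h_1$ is conjugate analytic and vanishes at $z_0$ --- is exactly your choice $h=u_++u_-(z_0)$, leading to the same identity $F(z_0)=\frac{2}{\pi}\iint_D|\partial f/\partial\bar z|^2\log\bigl|\frac{1-\overline{z_0}z}{z-z_0}\bigr|\,dxdy$. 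The only divergence there is the last step: the paper verifies the Carleson testing condition by hand (splitting $|z|\le 1/4$ from $|z|>1/4$ and citing Lemma 3.3 of Garnett, Ch.~VI), while you route through the Garsia norm and the Fefferman--Stein characterization of $BMO$; these are interchangeable. Where you genuinely depart is $(3)\Rightarrow(1)$. The paper computes $\|H_fg\|_2^2$ directly by Littlewood--Paley, asserting $\|H_fg\|_2^2=\frac{2}{\pi}\iint_D|g|^2|\partial f/\partial\bar z|^2\log\frac{1}{|z|}\,dxdy$; this conflates the harmonic extension of the boundary function $fg-P(fg)$ (which is conjugate analytic) with the non-harmonic product of the separate extensions, and the asserted \emph{equality} in fact fails: for $f=\bar z$ and $g=z$ one has $H_fg=0$ while the right-hand side equals $1/4$. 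Your duality argument --- pairing against $\bar\eta$ with $\eta\in H^2$, $\eta(0)=0$, applying Green's identity to $ug\eta$, and splitting $(g\eta)'=g'\eta+g\eta'$ before Cauchy--Schwarz --- claims only the upper bound that is actually needed and is the standard, watertight way to close this implication; it costs one extra Cauchy--Schwarz and a density/limiting step to justify Green's identity for $L^2$ data, but it buys a proof that does not rest on a false pointwise identification of $\nabla(fg-P(fg))$ with $(0,g\,\partial f/\partial\bar z)$. Your closing observation about why the exact splitting $u=u_++u_-$ is a one-variable phenomenon is accurate and consistent with the paper's Conjecture 2.
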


Moreover, applying Berndtsson's $L^2$-estimations for  solutions of $\overline{\partial}_b$-equation, we shall answer the question about the boundedness of Hankel operators on the Hardy space $H^2(\Omega)$, when $\Omega$ is a general pseudoconvex domain.  We shall also use Donnelly-Fefferman's $L^2$-estimates  to give a new proof for the $\overline{\partial}$-characterization of  $H_f$ on the Bergman space $A^2(\Omega)$. This result  has been proved in Li \cite{Li94}, by using the integral representation of a solution to $\overline{\partial}$-equation.

This paper is divided into six sections. After some necessary preliminaries in Section 2, we   solve the Gleason's  problem on the Hardy space $H^p(\Omega)$. In  Section 3, we   give the proof of Theorem 1.1 and generalize this result to  Hardy spaces $H^p(\Omega)$ for $p>1$.  The compactness of these Hankel operators shall be discussed in this section. In Section 4, we try to  investigate the   boundedness of Hankel operators when acting on Hardy spaces of general  pseudoconvex domains. Similarly,  we  use a different method to prove the corresponding result on the Bergman space of strongly pseudoconvex domain. Section 5 devotes to give a new characterization for  Hankel operator on the Hardy space in the case of the unit disc. In last section, we show that both $H_f$ and $H_{\overline{f}}$ are bounded on the Hardy space $H^2(\Omega)$ if and only if $f\in BMO(\partial\Omega)$, when $\Omega$ is a bounded strongly pseudoconvex domain, and $1/S(\cdot, z)$ is holomorphic and bounded in $\Omega$ for fixed $z\in \Omega$.

\section {Geometry of strongly pseudoconvex domains}\label{Sec2}

The geometry of strongly pseudoconvex domains is  well understood (see \cite{Hom73} and \cite{Ran86}). In this section, we collect some notations and results needed later.

Let $d(p, q)$ be the Koranyi distance between points on $\partial \Omega$ and $B(p, r)=\{\zeta\in \partial \Omega:  d(\zeta, p)<r\}$ the corresponding balls. The tent $Q(p, r)$ over $B(p, r)$ is $$Q(p, r)=\{z\in \Omega: d(z', p)+d(z)<r\},$$ where $d(z)=d(z, \partial\Omega)$ is the Euclidean distance and $z'$ is the projection of $z$ on $\partial\Omega$. A positive measure $d\mu$ on $\Omega$ is a Carleson measure if
$$\|\mu\|=\sup\limits_{p\in \partial \Omega, r>0}\frac{1}{\sigma(B(p, r))}\int_{Q(p, r)}d\mu=C$$
is finite. It is well-known \cite{Hom67} that $d\mu$ is a Carleson measure if and only if
$$\int_\Omega|\phi|^pd\mu\le C\|\phi\|^p_{H^p}, \ \ \ \  \phi\in H^p(\Omega).\eqno(2.1)$$
Also, we call $d\mu$ a vanishing Carleson measure if $$\lim\limits_{j\to \infty}\int_\Omega|\phi_j|^pd\mu=0$$
for any sequence $\{\phi_j\}$, which is bounded in $H^p(\Omega)$ and converges to $0$ uniformly on any compact subset of $\Omega$ as $j\to\infty$.

For $0\le \alpha\le 1$, we denote by  $W^\alpha$ the interpolation spaces between the space  $W^1$ of Carleson measures and the space $W^0$ of finite measures  in $\Omega$. If $0<\alpha<1$, then the measure $d\mu$ is in $W^\alpha$ if and only if $d\mu=\phi d\tau$, where $d\tau\in W^1$, $\phi\in L^p(d\tau)$ and $1/p=1-\alpha$ (see \cite{AB79}). In view of (2.1), if $\phi\in H^p(\Omega)$ and $d\tau\in W^1$, then $\phi d\tau\in W^\alpha$ and
$$\|\phi d\tau\|_{W^\alpha}\lesssim \|\phi\|_{H^p} \|d\tau\|_{W^1}. \eqno(2.2)$$

Let $\omega$ be a $\overline{\partial}$-closed $(0,1)$ form in $\Omega$. If $u$ is a function defined on $\partial \Omega$ that can be obtained as the boundary value of a solution to $\overline{\partial} u=\omega$,
then we write$$\overline{\partial}_b u=\omega$$
 (see \cite{Sko76}, \cite{AB79} or \cite{Ber87}).
Following Skoda \cite{Sko76}, this equation can be given by $$\int_{\partial\Omega}uh=\int_\Omega \omega\wedge h.   \eqno(2.3)$$
for any smooth $\overline{\partial}$-closed form $h$ of bidegree $(n, n-1)$.

In \cite{AB79}, Amar and Bonami gave the following  on the smoothly bounded strongly pseudoconvex domains, which is very important for proving our main result.
\medskip
\\
{\bf Theorem A.}  {\it Suppose that $\omega$ is a smooth $\overline{\partial}$-closed $(0,1)$ form on $\overline{\Omega}$  and $p>1$. If the coefficients of $\omega$ and $\frac{\overline{\partial} \rho\wedge \omega}{\sqrt{-\rho}}$ belong to the space $W^\alpha$   with $1/p=1-\alpha$,  then there is a solution of $\overline{\partial}_b u=\omega$ continuous on $\overline{\Omega}$ such that
 $$\|u\|_{L^p(\partial \Omega)}\le C\biggl\|\biggl(|\omega|+\frac{|\omega\wedge \overline{\partial}\rho|}{\sqrt{-\rho}}\biggr)dV\biggr\|_{W^\alpha} $$ for a constant $C>0$.}

\medskip

The space $BMO(\partial\Omega)$ is defined in terms of the Koranyi distance as the space of functions such that
$$\|f\|_{BMO}:=\sup\limits_{p\in \partial\Omega, r>0}\frac{1}{\sigma(B(p, r))}\int_{B(p, r)}|f(\zeta)-f_{B(p, r)}|d\sigma(\zeta)<\infty,$$
where $f_{B(p, r)}$ is the average of $f$ over the ball $B(p, r)$. Moreover, $VMO(\partial\Omega)$ is defined as the subspace of $BMO$ which is the closure of the continuous functions in the $BMO$ topology. We also put $BMOA=BMO\cap\mathcal{O}(\Omega)$ and $VMOA=VMO\cap\mathcal{O}(\Omega)$

When $\Omega$ is strongly pseudoconvex, the reproducing property of the Poisson-Szeg\"{o} kernel gives
 $$
f(z)=\int_{\partial\Omega}f(\zeta)P(z, \zeta)d\sigma(\zeta) \ \ \ \ \  \mbox{for} \ f\in H^2(\Omega)
$$
(see \cite{Ste72}).
It follows from  \cite{LL07} that  $f\in BMOA(\Omega)$  if and only if
$$\|f\|^2_{BMOA}=\sup\biggl\{\int_{\partial\Omega}|f(\zeta)-f(z)|^2P(z, \zeta)d\sigma(\zeta); z\in \Omega\biggr\}<\infty,$$
by using similar arguments as in the book of Garnett \cite{Gar81} and the John-Nireberg Lemma. In fact, as  in \cite{Gar81}, one may show that  these characterizations also hold for $BMO(\partial\Omega)$ and  $VMO(\partial\Omega)$, the only difference is that $f(z)$ should be replaced by  $$\widetilde{f}(z)=\int_{\partial\Omega} f(\zeta)  P(z, \zeta)d\sigma(\zeta),$$  the Poisson-Szeg\"{o} integral of $f$.

By \cite{For76} there is a smooth function $\Phi(\zeta, z)$ on $\overline{\Omega}\times\overline{\Omega}$ which is holomorphic in $z$ for fixed $\zeta\in \overline{\Omega}$ and satisfies
$$2\mbox{Re} \Phi\ge -\rho(\zeta)-\rho(z)+\delta|\zeta-z|^2$$ and $d_\zeta \overline{\Phi}|_{\zeta=z}=-d_z\Phi|_{\zeta=z}=-\partial\rho(\zeta)$.
It is well-known that $|\Phi(\zeta, z)|$ is compatible with the Koranyi distance $d(\zeta, z)$. Moreover, there is a form $Q(\zeta, z)=\sum\limits_{j=1}^n Q_j(\zeta,z)d\zeta_j$, holomorphic in $z$, such that $\Phi(\zeta, z)=\sum\limits_{j=1}^n Q_j(\zeta, z)(z_j-\zeta_j)$ and  the following  Cauchy-Fantappi\`{e} formula holds
$$Fu(z)=c\int_{\partial \Omega}u(\zeta)\frac{Q\wedge(\overline{\partial}Q)^{n-1}}{\Phi(\zeta, z)^n}d\sigma(\zeta), \ \ \ z\in\Omega.$$
Clearly, $Fu$ is holomorphic in $\Omega$ if $u\in L^p(\partial\Omega)$, $1\le p\le \infty$ and $Fu=u$ if $u$ is the boundary value of a holomorphic function. In fact, the kernel in the Cauchy-Fantappi\`{e} formula is also called the Henkin-Ramirez reproducing kernel. Thus, $Fu$ has admissible boundary values a.e. if  $u\in L^p(\partial\Omega)$,  $p>1$, and this operator maps $L^p(\partial\Omega)$ into $H^p(\Omega)$, $BMO$ into $BMOA$, we refer to  \cite{Hen70} or \cite{Ran86} for details. We also notice that the Cauchy-Fantappi\`{e} formula for a holomorphic function $u$ can be written as follows
$$u(z)=\int_{\partial \Omega}\frac{A(\zeta, z)u(\zeta)}{\Phi(\zeta, z)^n}d\sigma(\zeta), $$
where $A(\zeta, z)$ is smooth and holomorphic in $z$.

To conclude this section, we will  solve the following Gleason's problem in the setting  of  bounded strongly pseudoconvex domains. Let $X$ be some class of holomorphic functions in a region $\Omega\subset \mathbb{C}^n$. Gleason's problem for $X$ is the following:

{\it If $a\in \Omega$ and $f\in X$, do there exist functions $g_1, \ldots, g_n\in X$ such that $$f(z)-f(a)=\sum\limits_{j=1}^n(z_j-a_j)g_j(z)$$
 for all $z\in \Omega$?}

Gleason \cite{Gle64} originally asked this problem for $\Omega=B_n$, $a=0$ and $X=A(B_n)$, the ball algebra, which was solved by Leibenson. If  $X$ is one of the spaces $H^p(B_n)$ ($1\le p\le\infty$), $A(B_n)$ and $\mbox{Lip}_\alpha$ ($0<\alpha\le 1$), then the  problem was also solved (see  \cite{Rud80}). The simplest solution is probably the one found by Ahern and Schneider \cite{AS95}, which also works for strongly pseudoconvex domains when $X=\mbox{Lip}_\alpha$  or  $A(\Omega)$. Here, we shall solve the problem for  $X=H^p(\Omega)$ when $\Omega$ is strongly pseudoconvex.

First of all, we recall the following two results due to Stout \cite{Stou76}.
\medskip
\\
{\bf Theorem B.} {\it If $f\in H^p(\Omega)$, $1\le p\le\infty$ and if $\gamma$ is a  Lipschitz continuous function on $\mathbb{C}^n$, then the function $$F(z)=\int_{\partial \Omega}f(\zeta)\gamma(\zeta)\frac{Q\wedge(\overline{\partial}Q)^{n-1}}{\Phi(\zeta, z)^n}d\sigma(\zeta)$$
belongs to $H^p(\Omega)$.}

\medskip

Note that
$$F(z)=\int_{\partial \Omega}f(\zeta)[\gamma(\zeta)-\gamma(z)]\frac{Q\wedge(\overline{\partial}Q)^{n-1}}{\Phi(\zeta, z)^n}d\sigma(\zeta)+\gamma(z)f(z).$$
Thus,  in order to proving  Theorem B, the author  considered   the operator $T=T_\gamma$ defined by
$$Tg(z)=\int_{\partial \Omega}g(\zeta)[\gamma(\zeta)-\gamma(z)]\frac{Q\wedge(\overline{\partial}Q)^{n-1}}{\Phi(\zeta, z)^n}d\sigma(\zeta),$$
and obtained  the following boundedness properties of the operator $T$.
\medskip
\\
{\bf Theorem C.}\  \ \  (1) {\it If $g\in L^q(\partial\Omega)$, $q\ge 2n$, then $Tg$ is bound on $\Omega$.}
\ \ (2) {\it If $g\in L^p(\partial\Omega)$, $1\le p<\infty$, then
 $$\sup\limits_{r<0}\int_{\rho=r}|Tg(\zeta)|^pd\sigma(\zeta)<\infty.$$}

Now we  may solve  Gleason's  problem  for $H^p(\Omega)$.

\begin{proposition}\label{prop 2.1}
Let $\Omega$ be a bounded strongly pseudoconvex domain with smooth boundary. If $f\in H^p(\Omega)$, $1\le p\le \infty$, and $a\in \Omega$, then there exist functions $g_1,\ldots, g_n\in H^p(\Omega)$ such that $$f(z)-f(a)=\sum\limits_{j=1}^n(z_j-a_j)g_j(z).$$
\end{proposition}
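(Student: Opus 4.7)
The plan is to use the Cauchy--Fantappi\`e reproducing formula $f(z) = \int_{\partial\Omega} f(\zeta) A(\zeta, z)/\Phi(\zeta, z)^n\, d\sigma(\zeta)$ recalled at the end of Section 2, subtract its value at $a$, and ``divide'' the resulting kernel difference by $(z_j - a_j)$. Writing $H(\zeta, z) = A(\zeta, z)/\Phi(\zeta, z)^n$, I have $f(z) - f(a) = \int_{\partial\Omega} f(\zeta)[H(\zeta, z) - H(\zeta, a)]\, d\sigma(\zeta)$, with integrand holomorphic in $z$ on $\Omega$ and vanishing at $z = a$. Since $A$ and $\Phi$ are holomorphic in $z$, the integral form of Taylor's theorem yields
$$A(\zeta, z) - A(\zeta, a) = \sum_{j=1}^n (z_j - a_j) B_j(\zeta, z), \qquad \Phi(\zeta, z) - \Phi(\zeta, a) = \sum_{j=1}^n (z_j - a_j) M_j(\zeta, z),$$
with $B_j, M_j$ smooth on $\overline{\Omega} \times \overline{\Omega}$ and holomorphic in $z$. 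Combined with the telescoping identity $\Phi(\zeta, z)^n - \Phi(\zeta, a)^n = [\Phi(\zeta, z) - \Phi(\zeta, a)] \sum_{k=0}^{n-1} \Phi(\zeta, z)^{n-1-k}\Phi(\zeta, a)^k$, this produces an algebraic decomposition
$$H(\zeta, z) - H(\zeta, a) = \sum_{j=1}^n (z_j - a_j)\, L_j(\zeta, z),$$
in which each $L_j$ is a finite sum of terms $\psi(\zeta, z)/\Phi(\zeta, z)^m$ with $1 \le m \le n$ and $\psi$ smooth on $\overline{\Omega} \times \overline{\Omega}$ and holomorphic in $z$, together with a purely smooth piece $B_j(\zeta, z)/\Phi(\zeta, a)^n$.

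Setting $g_j(z) := \int_{\partial\Omega} f(\zeta) L_j(\zeta, z)\, d\sigma(\zeta)$ automatically makes $g_j$ holomorphic on $\Omega$ with $f(z) - f(a) = \sum_{j=1}^n (z_j - a_j) g_j(z)$, so the remaining task is to show $g_j \in H^p(\Omega)$. Because $a \in \Omega$ is fixed, the support-function estimate $2\,\mbox{Re}\,\Phi(\zeta, a) \ge -\rho(\zeta) - \rho(a) + \delta|\zeta - a|^2$ gives $|\Phi(\zeta, a)| \ge -\rho(a) > 0$ on $\partial\Omega$, so the factors $1/\Phi(\zeta, a)^{k+1}$ arising in $L_j$ are smooth bounded functions of $\zeta$. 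Consequently each $L_j$ is a finite sum of kernels $\psi(\zeta, z)/\Phi(\zeta, z)^m$ with $m \le n$ plus a smooth bounded kernel; those with $m < n$ have strictly milder singularity than the Henkin--Ramirez kernel, those with $m = n$ match its type exactly, and the smooth piece trivially gives a bounded holomorphic function. Each such operator is bounded on $H^p(\Omega)$ by Stout's Theorems B and C.

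The main obstacle is that Theorems B and C treat multipliers $\gamma(\zeta)$ depending on $\zeta$ alone, whereas the factors $\psi(\zeta, z)$ above depend on $z$ holomorphically as well. I plan to reduce to the stated form via the expansion $\psi(\zeta, z) = \psi(\zeta, \zeta) + \sum_{j=1}^n (z_j - \zeta_j)\psi_j(\zeta, z)$, obtained by Taylor expanding in $z$ at the point $\zeta$, with $\psi_j$ smooth on $\overline{\Omega} \times \overline{\Omega}$ and holomorphic in $z$. The first summand gives $\psi(\zeta, \zeta)$, a Lipschitz function of $\zeta$ alone, to which Theorem B applies directly; the correction terms acquire an extra factor $(z_j - \zeta_j)$, which dominates the $1/\Phi(\zeta, z)^n$ singularity by a half order because $|\zeta - z|^2 \lesssim |\Phi(\zeta, z)|$ on $\overline{\Omega} \times \overline{\Omega}$, leaving kernels to which Theorem C or an iteration of the same argument applies.
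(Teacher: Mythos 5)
Your proposal is correct and follows essentially the same route as the paper: reproduce $f$ by the Cauchy--Fantappi\`e formula, divide the kernel difference by $(z_j-a_j)$ (the paper cites {\O}vrelid for this step, while you rederive it by Taylor expansion plus the telescoping identity for $\Phi^n$ --- a cosmetic difference), and then handle the $z$-dependence of the resulting smooth factors by freezing them on the diagonal $\zeta=z$ and treating the $O(|\zeta-z|)$ remainder separately. The paper's proof does exactly this, applying Stout's Theorem B to the frozen part and the uniform kernel estimates from the proof of Theorem C (via Riesz--Thorin interpolation between $L^1$ and $L^\infty$) to the remainder, which is precisely the endgame you sketch.
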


\begin{proof}
We follow the arguments used in solving Gleason's problem for the case $X=A(\Omega)$. As in Theorem 4.2 of \cite{Ran86},  the reproducing property of the Henkin-Ramirez kernel gives
{\setlength\arraycolsep{2pt}
\begin{eqnarray*}
f(z)-f(a)&=&\int_{\partial \Omega}f(\zeta)\biggl[\frac{A(\zeta, z)}{\Phi(\zeta, z)^n}-\frac{A(\zeta, a)}{\Phi(\zeta, a)^n}\biggr]d\sigma(\zeta)
\\ &=& \int_{\partial \Omega}f(\zeta)\biggl[\frac{A(\zeta, z)\Phi(\zeta, a)^n-A(\zeta, a)\Phi(\zeta, z)^n}{\Phi(\zeta, z)^n\Phi(\zeta, a)^n}\biggr]d\sigma(\zeta)
\end{eqnarray*}}for $f\in H^p(\Omega)$ and $z, a\in \Omega$.

By  \cite{Ovr71}, p. 148, there are functions $L_j, j=1,\ldots, n$, which are $C^\infty$ on $(\partial\Omega)\times \overline{\Omega}\times \overline{\Omega}$ and  holomorphic in $z$ and $a$, such that
$$A(\zeta, z)\Phi(\zeta, a)^n-A(\zeta, a)\Phi(\zeta, z)^n=\sum\limits_{j=1}^n(z_j-a_j)L_j(\zeta, z, a).$$
It follows that
$$f(z)-f(a)=\sum\limits_{j=1}^n(z_j-a_j)\int_{\partial \Omega}f(\zeta)\frac{L_j(\zeta, z, a)}{\Phi(\zeta, z)^n\Phi(\zeta, a)^n} d\sigma(\zeta).\eqno(2.4)$$
Define $$
T_j f(z, a)=\int_{\partial \Omega}f(\zeta)\frac{L_j(\zeta, z, a)}{\Phi(\zeta, z)^n\Phi(\zeta, a)^n} d\sigma(\zeta).
$$
Clearly, $T_jf$ is holomorphic on $\Omega\times \Omega$. If we show that  $T_j f(z, a)\in H^p(\Omega)$ for fixed  $a$, we were done.

Note that Corollary 3.10 in \cite{Ran86} gives  $A(\zeta, z)=h(\zeta)+O(|\zeta-z|)$, where $h(\zeta)\ne 0$ for all $\zeta\in \partial\Omega$. Set
$$\chi(\zeta, z, a)=\frac{L_j(\zeta, z, a)}{h(\zeta)\Phi(\zeta, a)^n}.$$
We have   $\chi(\zeta, z, a)=\chi(\zeta, \zeta, a)+O(|\zeta-z|)$. A calculation shows that
{\setlength\arraycolsep{2pt}
\begin{eqnarray*}
&&\frac{L_j(\zeta, z, a)}{\Phi(\zeta, z)^n\Phi(\zeta, a)^n}=\frac{L_j(\zeta, z, a)}{h(\zeta)\Phi(\zeta, a)^n}\cdot \frac{h(\zeta)}{\Phi(\zeta, z)^n}
\\ &&= \chi(\zeta, z, a) \frac{A(\zeta, z)}{\Phi(\zeta, z)^n}+\chi(\zeta, z, a)\frac{O(|\zeta-z|)}{\Phi(\zeta, z)^n}
\\ &&= \chi(\zeta, \zeta, a) \frac{A(\zeta, z)}{\Phi(\zeta, z)^n}+O(|\zeta-z|)\frac{A(\zeta, z)}{\Phi(\zeta, z)^n}+\chi(\zeta, z, a)\frac{O(|\zeta-z|)}{\Phi(\zeta, z)^n}.
\end{eqnarray*}}This implies
{\setlength\arraycolsep{2pt}
\begin{eqnarray*}
T_j f(z, a)&&=\int_{\partial \Omega}f(\zeta)\chi(\zeta, \zeta, a)\frac{A(\zeta, z)}{\Phi(\zeta, z)^n}d\sigma(\zeta)
\\ && +\int_{\partial \Omega}f(\zeta)O(|\zeta-z|)\frac{A(\zeta, z)}{\Phi(\zeta, z)^n}d\sigma(\zeta)+ \int_{\partial \Omega}f\chi\frac{O(|\zeta-z|)}{\Phi(\zeta, z)^n}d\sigma(\zeta).\ \ \ \ (2.5)
\end{eqnarray*}}

Next, we will need a technique of  Stout \cite{Stou76} to estimate (2.5), which is different from that in the proof of Theorem 4.2 in \cite{Ran86}. Since $\chi(\zeta, \zeta, a)\in C^\infty$, it follows from Theorem B that
$$F(z)=\int_{\partial \Omega}f(\zeta)\chi(\zeta, \zeta, a)\frac{A(\zeta, z)}{\Phi(\zeta, z)^n}d\sigma(\zeta)$$
belongs to $H^p(\Omega)$, $1\le p\le\infty$.

Since the functions $A$ and  $\chi$ are smooth, we may use similar ideas to estimate  the second term in (2.5). It remains to consider the last term.   Set $$Tf(z)=\int_{\partial \Omega}f\chi\frac{O(|\zeta-z|)}{\Phi(\zeta, z)^n}d\sigma(\zeta)$$
and $T^{(r)}f= Tf|_{\{\rho=r\}}$  when  $r$ is close to $0$. Once we  prove the following  claim
$$\|T^{(r)}f\|_{L^\infty(\{\rho=r\})}\le C\|f\|_{L^\infty(\partial\Omega)}\ \ \mbox{and}\ \ \|T^{(r)}f\|_{L^1(\{\rho=r\})}\le C\|f\|_{L^1(\partial\Omega)}$$
for a constant $C>0$,  the  Riesz-Thorin theorem implies that if $f\in L^p(\partial\Omega)$, $1<p<\infty$, then $$\|T^{(r)}f\|_{L^p(\{\rho=r\})}\le C\|f\|_{L^p(\partial\Omega)}.$$
Since the constant $C$ is independent of $r$, we obtain that $Tf\in L^p(\partial\Omega)$ for $1<p<\infty$.

To verify the claim, we first assume  $f\in L^\infty(\partial\Omega)$.  Since   $\chi$ is  smooth,  we have
$$|Tf(z)|  \lesssim \|f\|_\infty\biggl(\int_{\partial\Omega}\frac{O(|\zeta-z|)}{|\Phi(\zeta, z)|^n}d\sigma(\zeta)\biggr)$$ for any $z\in \Omega$.
In the proof of Theorem C, the author has showed  that the above integral is finite. The proof of Theorem C also  yields that
$$\int_{\rho=r}\frac{O(|\zeta-z|)}{|\Phi(\zeta, z)|^n}d\sigma(z)$$
are bounded uniformly in $\zeta, r$, where $\zeta\in \partial\Omega$ and $r$  is close to $0$. Thus, if $f\in L^1(\partial\Omega)$, we get that
$$\int_{\rho=r}|Tf(z)|d\sigma(z)$$ are bounded uniformly in $r$  near  $0$.  Thus the claim is verified.

Combining the above arguments with (2.5), we find that   $T_jf(\cdot, a)\in H^p(\Omega)$ for $1\le p\le \infty$.  Taking $g_j(z)=T_jf(z, a)$,  we obtain the desired result   from (2.4).
\end{proof}

 Proposition 2.1 gives a  decomposition for  $H^2(\Omega)$. Similar as  the proof of Lemma 2.5 in \cite{Amar95}, we may easily deduce the following lemma.

\begin{lemma}\label{lem 2.2}
The space $H^2(\Omega)^\bot$ can be identified with the space of $\overline{\partial}_b$-closed forms of  bidegree $(n, n-1)$  satisfying
$$ h\in H^2(\Omega)^\bot\rightarrow \mathcal{H}\in L^2_{(n, n-1)}(\partial\Omega), \ \ \ \ \overline{\partial}_b\mathcal{H}=0$$
and $$\int_{\partial\Omega} \phi\cdot \overline{h} d\sigma=\int_{\partial\Omega} \phi\cdot\mathcal{H}, \ \ \ \  \forall\, \phi\in L^2(\partial\Omega).$$
\end{lemma}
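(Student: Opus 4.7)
I would mirror the strategy of Lemma~2.5 in \cite{Amar95}. The idea is to identify a function $h\in L^2(\partial\Omega)$ with the top-degree form $\mathcal{H}:=\overline{h}\,d\sigma$ on $\partial\Omega$, which we read as an $(n,n-1)$-form via a fixed smooth trivialization. Concretely, I would fix once and for all a smooth, nowhere-vanishing $(n,n-1)$-form $\omega_0$ on $\overline{\Omega}$ whose pullback to $\partial\Omega$ is a smooth positive multiple of $d\sigma$; a natural choice is furnished by the Leray form $Q\wedge(\overline{\partial} Q)^{n-1}$ from Section~2. For $h\in L^2(\partial\Omega)$ we set $\mathcal{H}:=\overline{h}\,\omega_0|_{\partial\Omega}\in L^2_{(n,n-1)}(\partial\Omega)$. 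The map $h\mapsto\mathcal{H}$ is a Hilbert-space isomorphism and the pairing identity
$$\int_{\partial\Omega}\phi\cdot\mathcal{H}=\int_{\partial\Omega}\phi\overline{h}\,d\sigma,\qquad \phi\in L^2(\partial\Omega),$$
holds by construction.

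Next I would give the equation $\overline{\partial}_b\mathcal{H}=0$ the weak meaning dual to~(2.3): $\mathcal{H}$ is $\overline{\partial}_b$-closed precisely when $\int_{\partial\Omega}g\,\mathcal{H}=0$ for every function $g$ smooth on $\overline{\Omega}$ and holomorphic on $\Omega$. The justification is Stokes' theorem applied to any smooth extension $\widetilde{\mathcal{H}}$ of $\mathcal{H}$ to $\overline{\Omega}$: since $d(g\widetilde{\mathcal{H}})=\overline{\partial} g\wedge\widetilde{\mathcal{H}}+g\,\overline{\partial}\widetilde{\mathcal{H}}$ and $g$ is holomorphic, the first summand vanishes, so $\overline{\partial}$-closedness of the bulk form $\widetilde{\mathcal{H}}$ is exactly the statement that the boundary current $\mathcal{H}$ annihilates holomorphic test functions, in parallel with the role played by the test forms in~(2.3).

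With this dictionary in place, the identification follows from a density argument. The boundary values of functions holomorphic in a neighborhood of $\overline{\Omega}$ are dense in $H^2(\Omega)$, a standard fact proved, for instance, by slight inward dilation of the defining function and regularization through the Cauchy-Fantappi\`{e} operator $F$ recalled in Section~2. Consequently $h\in H^2(\Omega)^\bot$ is equivalent to $\int_{\partial\Omega}g\overline{h}\,d\sigma=0$ for every such $g$, which by the pairing identity of the first paragraph is the same as $\int_{\partial\Omega}g\,\mathcal{H}=0$, i.e.\ $\overline{\partial}_b\mathcal{H}=0$. The main obstacle I anticipate is pinning down the weak definition of $\overline{\partial}_b$ on top-degree $L^2$ currents in a way genuinely consistent with~(2.3); once that is settled, the rest is bookkeeping combined with the standard density of the holomorphic functions in $H^2(\Omega)$.
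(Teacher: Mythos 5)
The paper offers no argument for this lemma beyond the sentence ``similar as the proof of Lemma 2.5 in \cite{Amar95}'', so your proposal cannot be compared against a written proof; it has to be measured against what the lemma must deliver for Proposition 2.3. Your first and third paragraphs are fine: setting $\mathcal{H}=\overline{h}\,\omega_0$ for a fixed Leray-type form $\omega_0$ restricting to a positive multiple of $d\sigma$ gives the pairing identity by construction, and the density of $\mathcal{O}(\overline{\Omega})$ in $H^2(\Omega)$ (Beatrous, as cited in Section 3) legitimizes reducing $h\in H^2(\Omega)^\perp$ to the moment condition $\int_{\partial\Omega} g\,\mathcal{H}=0$ for $g$ holomorphic near $\overline{\Omega}$.

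The gap is in your second paragraph. The Stokes computation proves only one implication: \emph{if} $\mathcal{H}$ admits a $\overline{\partial}$-closed extension $\widetilde{\mathcal{H}}$ to $\Omega$, \emph{then} $\int_{\partial\Omega}g\,\mathcal{H}=\int_\Omega g\,\overline{\partial}\widetilde{\mathcal{H}}=0$ for holomorphic $g$. It does not show that the moment condition forces some extension to be $\overline{\partial}$-closed: for an arbitrary smooth extension $\mathcal{H}_1$, the moment condition only says that the $(n,n)$-form $\overline{\partial}\mathcal{H}_1$ integrates to zero against holomorphic functions, and upgrading this to a $\overline{\partial}$-closed extension means solving $\overline{\partial}\beta=\overline{\partial}\mathcal{H}_1$ with $\beta$ pulling back to zero on $\partial\Omega$ --- a $\overline{\partial}$-Neumann-type problem, not bookkeeping. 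This matters because ``$\overline{\partial}_b\mathcal{H}=0$'' in the sense the paper actually uses (the convention stated before (2.3): being the boundary value of a $\overline{\partial}$-closed form) is exactly the strong statement, and Proposition 2.3 consumes it: the identity $\int_{\partial\Omega}fg\,\mathcal{H}=\int_\Omega g\,\overline{\partial}\widetilde{f}\wedge\mathcal{H}$ is Stokes applied to $\widetilde{f}g\widetilde{\mathcal{H}}$ and needs $\overline{\partial}\widetilde{\mathcal{H}}=0$ in $\Omega$ to kill the term $\widetilde{f}g\,\overline{\partial}\widetilde{\mathcal{H}}$. If you adopt your weak definition instead, Lemma 2.2 becomes a tautology and the unproved implication simply migrates into Proposition 2.3. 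So the ingredient you flag as ``the main obstacle'' and then set aside is precisely the content of the lemma: either produce the $\overline{\partial}$-closed extension (in the disc this is explicit, since $h\perp H^2$ gives $\overline{h}\in zH^2$ and $\mathcal{H}=\overline{h}\,dz/(iz)$ extends holomorphically; in general it is where the $\overline{\partial}$-Neumann machinery or an explicit division argument in the spirit of Proposition 2.1 must enter), or prove that your weak notion implies the strong one.
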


As an easy consequence of this lemma and Stokes' theorem,  we obtain the following result which generalizes a result of  Amar \cite{Amar95} in  the unit ball.

\begin{proposition}\label{prop 2.3}
Suppose that $\Omega$ is a bounded strongly pseudoconvex domain with smooth boundary. Let  $f\in L^2(\partial\Omega)$ and $\widetilde{f}$ be any Stokes' extension of $f$ in $\Omega$. Then
$$\overline{\partial}_b(H_f g)=g\cdot \overline{\partial} \widetilde{f}, \ \  \ \ \  \forall\, g\in H^\infty (\Omega).$$
\end{proposition}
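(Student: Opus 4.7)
The plan is to verify the identity directly from the weak definition (2.3) of $\overline{\partial}_b$. That is, for every smooth $\overline{\partial}$-closed form $h$ of bidegree $(n,n-1)$ on a neighborhood of $\overline{\Omega}$, I must show
$$\int_{\partial \Omega} (H_f g) \cdot h = \int_\Omega (g\, \overline{\partial} \widetilde{f}) \wedge h.$$

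I would split $H_f g = fg - P(fg)$ and handle the two pieces separately. For the $fg$ piece, the product $\widetilde{f}\, g$ qualifies as a Stokes extension of $fg$, since $g \in H^\infty(\Omega)$ is smooth and holomorphic in $\Omega$; thus, by the very definition of the Stokes extension applied to $fg$,
$$\int_{\partial \Omega} fg \cdot h \;=\; \int_\Omega \overline{\partial}(\widetilde{f}\, g) \wedge h \;=\; \int_\Omega g\, \overline{\partial} \widetilde{f} \wedge h,$$
where the second equality uses $\overline{\partial} g = 0$. For the $P(fg)$ piece, $P(fg) \in H^2(\Omega)$ is holomorphic on $\Omega$ and hence is its own Stokes extension with $\overline{\partial} P(fg) = 0$, so the same identity gives $\int_{\partial \Omega} P(fg) \cdot h = 0$. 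Subtracting yields the displayed equality, and (2.3) then produces $\overline{\partial}_b(H_f g) = g\, \overline{\partial} \widetilde{f}$.

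The main technical issue I expect is justifying that $\widetilde{f}\, g$ genuinely serves as a Stokes extension for $fg$ when $f$ has only $L^2(\partial \Omega)$ boundary regularity and $g \in H^\infty(\Omega)$ does not necessarily extend continuously to $\overline{\Omega}$. Following Lemma 2.5 of \cite{Amar95}, the natural remedy is to integrate first on $\partial \Omega_\varepsilon$, where every integrand is classically smooth so that Stokes' theorem applies directly, and then pass to the limit $\varepsilon \to 0^+$. The boundary-side limit is controlled by the definition of the $H^p(\Omega)$ norm together with the admissible-limit theorems in \cite{Ste72}, while the interior-side limit is handled by dominated convergence, using that $|\overline{\partial} \widetilde{f}|\, dV$ is integrable against the smooth form $h$ by hypothesis on the Stokes extension. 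Once these limits are secured, the entire argument transcribes the ball version of \cite{Amar95} to the strongly pseudoconvex setting without further changes, the Cauchy-Fantappi\`{e} geometry of Section 2 replacing the explicit ball kernels.
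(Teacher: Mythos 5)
Your proposal is correct and follows essentially the same route as the paper: both arguments reduce the claim to Stokes' theorem applied to the extension $\widetilde{f}g$ (using $\overline{\partial}g=0$), kill the $P(fg)$ term by its holomorphy/orthogonality to the $\overline{\partial}$-closed $(n,n-1)$ test forms, and invoke the weak formulation (2.3); the paper merely phrases the pairing through the identification of $H^2(\Omega)^\perp$ with $\overline{\partial}_b$-closed forms in Lemma 2.2. Your extra care with the limit over $\partial\Omega_\varepsilon$ is a reasonable way to make precise what the paper leaves to the reference to Amar.
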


\begin{proof}
The argument  is similar to Proposition 2.4 in \cite{Amar95}. For  convenience, we give a simple proof here.

Let $g\in H^\infty (\Omega)$ and $u=H_f g=fg-P(fg)$. Applying Lemma 2.2 and Stokes' theorem,  for any $h\in H^2(\Omega)^\bot$, we get
{\setlength\arraycolsep{2pt}
\begin{eqnarray*}
\int_{\partial\Omega} u\cdot \overline{h} d\sigma &=& \int_{\partial\Omega} [fg-P(fg)]\cdot \overline{h} d\sigma = \int_{\partial\Omega} fg\cdot \overline{h} d\sigma
\\ &=& \int_{\partial\Omega} fg\cdot \mathcal{H}=  \int_{\Omega} g \cdot\overline{\partial}\widetilde{f}\wedge\mathcal{H}.
\end{eqnarray*}}This combined with (2.3) yields $\overline{\partial}_b u=g\cdot \overline{\partial}\widetilde{f}$, which   completes the proof.
\end{proof}

{\bf Remark.} In fact, if $g\in H^\infty (\Omega)$ and  $f\in L^2 (\partial\Omega)$, then $P(fg)\in H^2 (\Omega)$. Now, applying the definition of $\overline{\partial}_b$ in Page 166 of \cite{CS00},  we easily deduce that  $H_f g=fg-P(fg)$ is a solution of $\overline{\partial}_b u=g \cdot\overline{\partial}\widetilde{f}$ for the extension $\widetilde{f}$ of $f$.

\section{Hankel operators on  Hardy spaces of strongly pseudoconvex domains with smooth boundarys}\label{Sec3}

Before proving Theorem 1.1, we shall introduce one more notion. Let $f\in C^1(\Omega)$, we define the nonisotropic gradiant near the boundary of $\Omega$   as
$$|Df|=|\nu_0(f)|+|\mu_0(f)|+|\rho|^{-1/2}\sum\limits_{j=1}^{2n-2}|\mu_j(f)|$$
(see \cite{Var77}), where $\nu_0$ is a normalized vector field in some neighborhood of $\partial\Omega$ that is normal and directed inwards to $\partial\Omega$ at every point $\zeta\in \partial\Omega$, $\mu_0$ is a normalized field that is  orthogonal to the holomorphic tangent space at every point  $\zeta\in \partial\Omega$, and fields $\mu_1,\ldots, \mu_{2n-2}$ are constructed to  form an orthonormal basis at $\zeta$.

In \cite{CRW76}, the authors used the theory of singular integral operators to  show that $H_f$ is bounded on the Hardy space $H^2(B_n)$ if $f\in BMO(\partial B_n)$. Amar \cite{Amar95} applied the theory of $\overline{\partial}$ to prove that $H_f$ is bounded on $H^2(B_n)$ if $f$ admits a Stokes' extension $\widetilde{f}$  to $B_n$  such that both $|\overline{\partial}\widetilde{f}|dV$ and $\frac{1}{\sqrt{-\rho}}|\overline{\partial}\widetilde{f}\wedge\overline{\partial}\rho|dV$ are Carleson measures. Motivated by this result and the work of Varopoulos \cite{Var77} about the $BMO$ theory, we may generalize the result of \cite{CRW76}  to the bounded strongly pseudoconvex domains.

\begin{proof}[Proof of Theorem 1.1]
Let $f\in BMO(\partial\Omega)$. By Theorem 2.1.1 of \cite{Var77},  $f$ admits a decomposition $f=f_1+f_2$, such that $f_1\in L^\infty(\partial\Omega)$ and $f_2$ has an extension $\widetilde{f_2}$ in $\Omega$, which  satisfies that
$\lim\limits_{\lambda\rightarrow 0}\widetilde{f_2}(\zeta+\lambda \nu_0(\zeta))=\widetilde{f_2}(\zeta)$, $\forall\, \zeta\in \partial\Omega$,
and $|D\widetilde{f_2}|dV$ is a Carleson measure.

Since $f_1\in L^\infty(\partial\Omega)$, it is easy to see that the multiplication operator  $M_{f_1}$ is bounded from $H^2(\Omega)$ into $L^2(\partial\Omega)$,  which
 implies that $H_{f_1}=(I-P)M_{f_1}$ is bounded. Thus, it suffices  to prove the boundedness of $H_{f_2}$.

Now,  $f\in L^2(\partial\Omega)$ and $f_1\in L^\infty(\partial\Omega)$ give that $f_2\in L^2(\partial\Omega)$.  Since $|D \widetilde{f_2}|dV$ is a Carleson measure, we infer from Lemma 2.5.1 in \cite{Var77} that both $|\overline{\partial}\widetilde{f_2}|dV$ and $\frac{1}{\sqrt{-\rho}}|\overline{\partial}\widetilde{f_2}\wedge\overline{\partial}\rho|dV$ are  Carleson measures in $\Omega$.
 Let $g\in H^2(\Omega)$. Applying (2.2) to $|g\cdot \overline{\partial} \widetilde{f_2}|dV$ and $\frac{1}{\sqrt{-\rho}}|g\cdot \overline{\partial}\widetilde{f_2}\wedge\overline{\partial}\rho|dV$, we deduce that the coefficients of  $g\cdot \overline{\partial} \widetilde{f_2}$ and $g\cdot \overline{\partial}\widetilde{f_2}\wedge\overline{\partial}\rho/\sqrt{-\rho}$  belong to the space $W^{1/2}$.   Thus there is, according to  Theorem A,  a solution of the equation $\overline{\partial}_b u=g\cdot \overline{\partial}\widetilde{f_2}$  which satisfies $u\in L^2(\partial\Omega)$. Moreover,  using  Theorem A and (2.2), we obtain
{\setlength\arraycolsep{2pt}
\begin{eqnarray*}\|u\|_{L^2(\partial\Omega)}&\lesssim & \| |g\cdot \overline{\partial} \widetilde{f_2}|dV\|_{W^{1/2}}+\biggl\| \frac{1}{\sqrt{-\rho}}|g\cdot \overline{\partial}\widetilde{f_2}\wedge\overline{\partial}\rho|dV\biggr\|_{W^{1/2}}
\\ &\lesssim & \|g\|_{H^2}\biggl(\| |\overline{\partial} \widetilde{f_2}|dV\|_{W^1}+\biggl\|\frac{1}{\sqrt{-\rho}}| \overline{\partial}\widetilde{f_2}\wedge\overline{\partial}\rho|dV\biggr\|_{W^1}\biggr).
\end{eqnarray*}}

In view of Proposition 2.3,  we have $\overline{\partial}_b( H_{f_2}g)=g\cdot \overline{\partial}\widetilde{f_2}$ when $g\in H^\infty(\partial\Omega)$. Using the uniqueness of the solution orthogonal to $H^2(\Omega)$, we get
{\setlength\arraycolsep{2pt}
\begin{eqnarray*}\|H_{f_2}g\|_{L^2(\partial\Omega)}&\le & \|u\|_{L^2(\partial\Omega)}
\\ &\lesssim& \|g\|_{H^2}\biggl(\| |\overline{\partial} \widetilde{f_2}|dV\|_{W^1}+\biggl\|\frac{1}{\sqrt{-\rho}}| \overline{\partial}\widetilde{f_2}\wedge\overline{\partial}\rho|dV\biggr\|_{W^1}\biggr).
\end{eqnarray*}}Since $H^\infty(\partial\Omega)$ is dense in $H^2(\Omega)$, we see that $H_{f_2}$ is bounded on $H^2(\Omega)$ and
$$\|H_{f_2}\| \lesssim \biggl(\| |\overline{\partial} \widetilde{f_2}|dV\|_{W^1}+\biggl\|\frac{1}{\sqrt{-\rho}}| \overline{\partial}\widetilde{f_2}\wedge\overline{\partial}\rho|dV\biggr\|_{W^1}\biggr).$$
 It follows that $H_f=H_{f_1}+H_{f_2}$  is bounded from $H^2(\Omega)$ into  $L^2(\partial\Omega)$.
\end{proof}

From  the proof of Theorem 1.1, we may also give the following characterization for the compactness of $H_f$ on the Hardy space $H^2(\Omega)$.

\begin{theorem}\label{thm 3.1}
Let  $f\in L^2(\partial\Omega)$. If $f$  admits a decomposition $f=f_1+f_2$ with $f_1\in C(\partial\Omega)$ and $\widetilde{f_2}$  being an extension of $f_2$  in $\Omega$, such that
  $|\overline{\partial}\widetilde{f_2}|dV$ and $\frac{1}{\sqrt{-\rho}}|\overline{\partial}\widetilde{f_2}\wedge\overline{\partial}\rho|dV$ are vanishing Carleson measures in $\Omega$,
then the Hankel operator $H_f$ is compact from $H^2(\Omega)$ into $L^2(\partial\Omega)$.
\end{theorem}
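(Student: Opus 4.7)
The plan is to mirror the proof of Theorem~1.1, replacing each bounded estimate by its vanishing counterpart. Writing $H_f=H_{f_1}+H_{f_2}$, and using that sums of compact operators are compact, it suffices to show that $H_{f_1}$ and $H_{f_2}$ are each compact from $H^2(\Omega)$ into $L^2(\partial\Omega)$.

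For the compactness of $H_{f_2}$, I would take a sequence $\{g_j\}\subset H^\infty(\Omega)$ that is bounded in $H^2(\Omega)$ and converges to $0$ uniformly on every compact subset of $\Omega$, and show that $\|H_{f_2}g_j\|_{L^2(\partial\Omega)}\to 0$. By Proposition~2.3 and the uniqueness of the $\overline{\partial}_b$-solution orthogonal to $H^2(\Omega)$, $H_{f_2}g_j$ coincides with the $L^2$-minimal solution of $\overline{\partial}_b u = g_j\cdot\overline{\partial}\widetilde{f_2}$, so Theorem~A yields
$$\|H_{f_2}g_j\|_{L^2(\partial\Omega)}\;\lesssim\;\Bigl\|\Bigl(|g_j\,\overline{\partial}\widetilde{f_2}|+\tfrac{1}{\sqrt{-\rho}}\,|g_j\,\overline{\partial}\widetilde{f_2}\wedge\overline{\partial}\rho|\Bigr)\,dV\Bigr\|_{W^{1/2}}.$$
The key step is to invoke the interpolation description of $W^{1/2}$: taking $d\tau=|\overline{\partial}\widetilde{f_2}|\,dV\in W^1$, one has $\|g_j\,d\tau\|_{W^{1/2}}\lesssim\|g_j\|_{L^2(d\tau)}\cdot\|d\tau\|_{W^1}^{1/2}$, while the vanishing Carleson hypothesis (with $p=2$) forces $\int_\Omega|g_j|^2\,d\tau\to 0$, so this $W^{1/2}$-norm tends to $0$. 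The same argument handles the twisted term. Hence $\|H_{f_2}g_j\|_{L^2(\partial\Omega)}\to 0$; since $H^\infty(\Omega)$ is dense in $H^2(\Omega)$ and the vanishing property passes to arbitrary bounded sequences in $H^2$ tending to $0$ on compacts, $H_{f_2}$ is compact.

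For $H_{f_1}$ with $f_1\in C(\partial\Omega)$, I would use uniform approximation: pick $\phi_n\in C^\infty(\overline{\Omega})$ with $\|f_1-\phi_n|_{\partial\Omega}\|_{L^\infty(\partial\Omega)}\to 0$ (for instance by extending $f_1$ continuously to $\overline{\Omega}$ and mollifying). Then $H_{f_1}=H_{\phi_n|_{\partial\Omega}}+H_{f_1-\phi_n|_{\partial\Omega}}$, and the second summand has operator norm bounded by $\|f_1-\phi_n|_{\partial\Omega}\|_\infty\to 0$, so it is enough to verify that each $H_{\phi_n|_{\partial\Omega}}$ is compact. For this I would apply the argument for $H_{f_2}$ to $f_2=\phi_n|_{\partial\Omega}$ with extension $\widetilde{f_2}=\phi_n$: direct tent-integral computations show that both $dV$ and $dV/\sqrt{-\rho}$ are vanishing Carleson measures on $\Omega$, so the bounded continuous densities $|\overline{\partial}\phi_n|$ and $|\overline{\partial}\phi_n\wedge\overline{\partial}\rho|$ produce measures of the required vanishing Carleson type. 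Hence $H_{\phi_n|_{\partial\Omega}}$ is compact for every $n$, and being a uniform operator-norm limit of compact operators, $H_{f_1}$ is compact as well.

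The principal obstacle is the quantitative version of the $W^{1/2}$-interpolation claim: the boundedness argument of Theorem~1.1 only used (2.2) in the weaker form $\|\phi\,d\tau\|_{W^{1/2}}\lesssim\|\phi\|_{H^2}\|d\tau\|_{W^1}$, whereas compactness forces one to refine the first factor to $\|\phi\|_{L^2(d\tau)}$, since this is precisely the quantity that vanishes along sequences converging to $0$ on compacts. This sharper bound is essentially built into the Amar--Bonami interpolation identity $W^\alpha=(W^0,W^1)_\alpha$, but making it explicit requires a careful re-reading of \cite{AB79}; once it is in hand, the rest of the argument is routine.
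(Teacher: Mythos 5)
Your argument is correct, and it splits naturally into a part that coincides with the paper and a part that genuinely diverges. For $H_{f_2}$ you do exactly what the paper does: reuse the estimate $\|H_{f_2}g\|_{L^2(\partial\Omega)}\lesssim \||g\,\overline{\partial}\widetilde{f_2}|dV\|_{W^{1/2}}+\|(-\rho)^{-1/2}|g\,\overline{\partial}\widetilde{f_2}\wedge\overline{\partial}\rho|dV\|_{W^{1/2}}$ from the proof of Theorem 1.1 and let the vanishing Carleson hypothesis kill the right-hand side along bounded sequences tending to $0$ on compacts; your explicit intermediate bound $\|g\,d\tau\|_{W^{1/2}}\lesssim\|g\|_{L^2(d\tau)}\|d\tau\|_{W^1}^{1/2}$ is in fact a more honest rendering of the step the paper compresses into ``we infer from (2.1) and (2.2)'', since (2.2) as literally stated only controls the $W^{1/2}$-norm by $\|g\|_{H^2}$, which does not vanish. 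Where you differ is $H_{f_1}$: the paper invokes Kohn's solution of the $\overline{\partial}_b$-Neumann problem (via \cite{FK72}) to get compactness of $H_\phi$ for smooth $\phi$, and then Lemma 1 of \cite{SY78} to pass to uniform limits, whereas you obtain the smooth case by feeding $\phi_n$ back into the $f_2$-machinery, using that $dV$ and $dV/\sqrt{-\rho}$ are themselves vanishing Carleson measures (the tent integrals scale like $\delta\,\sigma(B(p,\delta))$ and $\delta^{1/2}\sigma(B(p,\delta))$ respectively), and then take the operator-norm limit via $\|H_{f_1-\phi_n}\|\le\|f_1-\phi_n\|_\infty$. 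Your route is more self-contained and keeps the whole proof inside the Amar--Bonami/Carleson framework already set up for Theorem 1.1, at the cost of the small extra verification of the two tent estimates; the paper's route is shorter on the page but imports two external results. Both are sound; in either version one should also record the standard fact that an operator annihilating, in norm, every bounded sequence converging to $0$ on compacts is compact (weak convergence in $H^2$ implies local uniform convergence via the reproducing kernel), a point the paper leaves implicit as well.
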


\begin{proof}
For $f_1\in C(\partial\Omega)\subset L^\infty(\partial\Omega)$, we know that $H_{f_1}$ is bounded on $H^2(\Omega)$. Using a consequence of the Kohn's solution of $\overline{\partial}_b$-Neumann problem (cf. \cite{FK72}), we see that $H_\phi$ is compact for  $\phi\in C^\infty(\partial\Omega)$. Since any $\phi\in C(\partial\Omega)$ can be approximated uniformly by smooth ones, it follows from Lemma 1 in \cite{SY78} that $$H_\phi=(I-P)M_\phi: H^2(\partial\Omega)\rightarrow L^2(\partial\Omega)$$ is compact for $\phi\in  C(\partial\Omega)$, where $ H^2(\partial\Omega)$ denotes   the closure of the $C^\infty$-functions on $\partial\Omega$ which can be extended holomorphically to $\Omega$. This means that $H_{f_1}$ is also compact on $H^2(\Omega)$.

Now consider the function $f_2$.  In the proof of Theorem 1.1, we have  shown  that there exists a positive constant $C$ such that
$$\|H_{f_2}g\|_{L^2(\partial\Omega)}\le  C\biggl(\| |g\cdot \overline{\partial} \widetilde{f_2}|dV\|_{W^{1/2}}+\biggl\| \frac{1}{\sqrt{-\rho}}|g\cdot \overline{\partial}\widetilde{f_2}\wedge\overline{\partial}\rho|dV\biggr\|_{W^{1/2}}\biggr)$$ for any $g\in H^2(\Omega)$. Let $\{g_j\}$ be a bounded sequence in $H^2(\Omega)$ and converge to $0$ uniformly on any compact subset of $\Omega$ as $j\to\infty$. Since   $|\overline{\partial}\widetilde{f_2}|dV$ and $\frac{1}{\sqrt{-\rho}}|\overline{\partial}\widetilde{f_2}\wedge\overline{\partial}\rho|dV$ are vanishing Carleson measures, we infer from  (2.1) and (2.2) that $$\| |g_j\cdot \overline{\partial} \widetilde{f_2}|dV\|_{W^{1/2}}\rightarrow0$$ and $$\biggl\| \frac{1}{\sqrt{-\rho}}|g_j\cdot \overline{\partial}\widetilde{f_2}\wedge\overline{\partial}\rho|dV\biggr\|_{W^{1/2}}\rightarrow0$$ as $j\rightarrow\infty$.
This yields that  $\lim\limits_{j\to\infty}\|H_{f_2}g_j\|_{L^2(\partial\Omega)}=0$. Thus $H_{f_2}$ is compact on  $H^2(\Omega)$. So we complete the proof.
\end{proof}

In \cite{Var77}, Varopoulos found that $f\in BMO(\partial\Omega)$ admits a decomposition $f=f_1+f_2$, such that $f_1\in L^\infty(\partial\Omega)$ and $f_2$ has an extension $\widetilde{f_2}$ in $\Omega$, which satisfying  that
$\lim\limits_{\lambda\rightarrow 0}\widetilde{f_2}(\zeta+\lambda \nu_0(\zeta))=\widetilde{f_2}(\zeta)$ for any $\zeta\in \partial\Omega$
and $|D\widetilde{f_2}|dV$ is a Carleson measure. This follows from a well-known result of  Carleson, i.e. if $f\in BMO(\partial \Omega)$ then there exists some Carleson measure  $d\mu$ in $\Omega$ such that $$f(\zeta)-\int_{\Omega}P(z, \zeta)d\mu(z)\in L^\infty (\partial \Omega).$$

For $f\in VMO(\partial \Omega)$ it is natural to make the following conjecture:
\medskip
\\
{\bf Conjecture 1.} {\it If $f\in VMO(\partial \Omega)$, then there exists some vanishing Carleson measure $d\mu$  in $\Omega$  such that  $$f(\zeta)-\int_{\Omega}P(z, \zeta)d\mu(z)\in C(\partial \Omega).$$ }

{\bf Remark.} Once this conjecture  is true, one may use similar arguments as Theorem 2.1.1 and Lemma 2.5.1 in \cite{Var77} to obtain the following  decomposition:
 if $f\in VMO(\partial \Omega)$, then $f=f_1+f_2$ with $f_1\in C(\partial\Omega)$ and  $\widetilde{f_2}$ being an extension of $f_2$, such that the measures $|\overline{\partial}\widetilde{f_2}|dV$ and $\frac{1}{\sqrt{-\rho}}|\overline{\partial}\widetilde{f_2}\wedge\overline{\partial}\rho|dV$ are vanishing Carleson measures in $\Omega$. As a consequence,  Theorem 3.1 implies that $H_f$ is compact from $H^2(\Omega)$ into $L^2(\partial\Omega)$ for  $f\in  VMO(\partial \Omega)$.

Now, for $0<p<\infty$ and $s>-1$, we recall that the weighted Bergman spaces $A^p_s(\Omega)$ consist of all holomorphic functions $f$ on $\Omega$ satisfying $$\|f\|_{p,s}=\biggl(\int_\Omega|f|^p(-\rho)^sdV\biggr)^{1/p}<\infty.$$
For $s=-1$, we define $A^p_{-1}(\Omega)$ to be the Hardy space $H^p(\Omega)$. Let $\Omega$  be a strongly pseudoconvex domain in $\mathbb{C}^n$, Theorem 5.4 in \cite{Bea85} gives that $\mathcal{O}(\overline{\Omega})$
is dense in  $A^p_s(\Omega)$ for $0<p<\infty$ and $s\ge -1$. This implies that $H^\infty(\Omega)$ is dense in $H^p(\Omega)$ for $0<p<\infty$. Since the projection $P$  via the Szeg\"{o} kernel $S(z, w)$ is bounded from $L^p(\partial\Omega)$  into  $H^p(\Omega)$  if  $p>1$, we may define Hankel operators $H_f$ on Hardy spaces  $H^p(\Omega)$.  On the other hand,   Theorem A and (2.2) hold for all  $p>1$. Therefore,  we  might use the same proof to obtain the following.

\begin{theorem}\label{thm 3.2}
Let  $\Omega$ be a  bounded  strongly pseudoconvex domain with smooth boundary in $\mathbb{C}^n$. Assume $f\in L^p(\partial\Omega)$ and  $p>1$. If $f\in BMO(\partial\Omega)$, then the Hankel operator $H_f$ is bounded from $H^p(\Omega)$ into $L^p(\partial\Omega)$.
\end{theorem}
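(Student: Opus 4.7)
The plan is to replay the proof of Theorem 1.1 with the exponent $2$ replaced by general $p>1$, since every ingredient used there has an $L^p$ analogue on bounded strongly pseudoconvex domains. The facts one needs are that the Szeg\"{o} projection $P$ is bounded on $L^p(\partial\Omega)$ for $p>1$ (as noted in Section 2, via the Henkin--Ramirez kernel); that the Carleson embedding (2.1) and the interpolation identity (2.2) hold for all $p>1$; and that Theorem A is stated for arbitrary $p>1$ with interpolation exponent $\alpha=1-1/p$.

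First I would apply Varopoulos's decomposition $f=f_1+f_2$ with $f_1\in L^\infty(\partial\Omega)$ and $|D\widetilde{f_2}|dV$ a Carleson measure in $\Omega$ for some extension $\widetilde{f_2}$ of $f_2$. Since $M_{f_1}:H^p(\Omega)\to L^p(\partial\Omega)$ is bounded and $P$ is $L^p$-bounded, $H_{f_1}=(I-P)M_{f_1}$ is bounded. For $H_{f_2}$, Lemma 2.5.1 of Varopoulos gives that both $|\overline{\partial}\widetilde{f_2}|dV$ and $\frac{1}{\sqrt{-\rho}}|\overline{\partial}\widetilde{f_2}\wedge\overline{\partial}\rho|dV$ are Carleson measures. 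For $g\in H^\infty(\Omega)$, the inequality (2.2) with $\alpha=1-1/p$ shows that the coefficients of $g\cdot\overline{\partial}\widetilde{f_2}$ and of $\frac{1}{\sqrt{-\rho}}g\cdot\overline{\partial}\widetilde{f_2}\wedge\overline{\partial}\rho$ lie in $W^\alpha$ with norms controlled by $\|g\|_{H^p}$. Theorem A then produces a solution $u\in L^p(\partial\Omega)$ of $\overline{\partial}_b u = g\cdot\overline{\partial}\widetilde{f_2}$ satisfying $\|u\|_{L^p(\partial\Omega)}\lesssim \|g\|_{H^p}$.

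Finally, by Proposition 2.3, $H_{f_2}g$ also solves this $\overline{\partial}_b$-equation, so $u-H_{f_2}g$ is the boundary value of a function in $H^p(\Omega)$. Applying $P$ and using $PH_{f_2}g=0$ forces $Pu=u-H_{f_2}g$, whence $H_{f_2}g=(I-P)u$ and $\|H_{f_2}g\|_{L^p(\partial\Omega)}\le (1+\|P\|_{L^p\to L^p})\|u\|_{L^p(\partial\Omega)}\lesssim \|g\|_{H^p}$. Density of $H^\infty(\Omega)$ in $H^p(\Omega)$ (Theorem 5.4 of Beatrous) extends the estimate to all of $H^p(\Omega)$, giving the desired boundedness of $H_f=H_{f_1}+H_{f_2}$. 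The one point where the argument departs from the $p=2$ proof is that one loses the $L^2$-orthogonality used there to identify $H_{f_2}g$ as the minimum-norm $(H^2)^\perp$-solution; the substitute, and the main (mild) obstacle, is precisely the $L^p$-boundedness of the Szeg\"{o} projection, which is the genuinely new ingredient needed for $p\neq 2$.
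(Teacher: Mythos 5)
Your proposal is correct and follows the paper's own route: the paper proves Theorem 3.2 simply by observing that Theorem A, the interpolation inequality (2.2), the $L^p$-boundedness of the Szeg\"{o} projection, and the density of $H^\infty(\Omega)$ in $H^p(\Omega)$ all remain valid for $p>1$, and then repeating the proof of Theorem 1.1. You are in fact more careful than the paper at the one genuinely $p$-dependent step: since the $L^2$ minimality of the orthogonal solution is unavailable for $p\neq 2$, you correctly replace it by identifying $H_{f_2}g=(I-P)u$ and invoking the $L^p$-boundedness of $P$.
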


Finally, we try to give a necessary condition for Hankel operator $H_f$ to be bounded on the Hardy space $H^2(\Omega)$, which shall build a bridge to solve the boundedness conjecture pointed out in Section 5.

\begin{theorem}\label{thm 3.3}
Let  $\Omega$ be a  smoothly bounded  strongly pseudoconvex domain.  If the Hankel operator $H_f$ is bounded from $H^2(\Omega)$ into $L^2(\partial\Omega)$, then the following inequality
$$ \inf\limits_{h\in H(B(w, r))}\frac{1}{\sigma(B(w, r))}\int_{B(w, r)}|f(\zeta)-h(\zeta)|^2d\sigma(\zeta)<\infty$$
holds for any $w\in \partial\Omega$ and $r>0$.
\end{theorem}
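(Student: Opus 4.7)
The plan is to test $H_f$ against a single function constructed from the Szegő kernel and to read off the holomorphic approximant $h$ from the Szegő projection of the product. Fix $w\in\partial\Omega$ and $r>0$, and let $w_r$ be the point on the inward normal to $\partial\Omega$ at $w$ with $-\rho(w_r)\asymp r^2$, chosen so that the Poisson--Szegő kernel $P(w_r,\cdot)$ is essentially concentrated on the Koranyi ball $B(w,r)$. I would take as test function $g(\zeta)=S(\zeta,w_r)$; since $w_r\in\Omega$, the slice $g$ is smooth up to the boundary, so $g\in H^\infty(\Omega)\subset H^2(\Omega)$ with $\|g\|_{H^2}^2=S(w_r,w_r)$.

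The boundedness hypothesis then gives
\begin{equation*}
\int_{\partial\Omega}|fg-P(fg)|^2\,d\sigma=\|H_fg\|_{L^2(\partial\Omega)}^2\le\|H_f\|^2\,S(w_r,w_r).
\end{equation*}
Define the candidate approximant $h=P(fg)/g$. Because $P(fg)\in H^2(\Omega)$ is holomorphic in $\Omega$ and $g$ is holomorphic and (as explained below) non-vanishing in a one-sided neighborhood of $B(w,r)$, the function $h$ is holomorphic there, so $h\in H(B(w,r))$ in the sense of the theorem. Factoring $fg-P(fg)=(f-h)g$ and restricting the integration to $B(w,r)$ yields
\begin{equation*}
\int_{B(w,r)}|f-h|^2\,|g|^2\,d\sigma\le\|H_f\|^2\,S(w_r,w_r).
\end{equation*}

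The last step is to exploit the lower bound $|g(\zeta)|^2\gtrsim S(w_r,w_r)/\sigma(B(w,r))$ for $\zeta\in B(w,r)$, which follows from the off-diagonal size estimate $|S(\zeta,w_r)|\asymp\bigl(-\rho(w_r)+|\Phi(\zeta,w_r)|\bigr)^{-n}$ combined with the normalization $S(w_r,w_r)\cdot\sigma(B(w,r))\asymp 1$ when $-\rho(w_r)\asymp r^2$. Dividing the previous display by this lower bound produces
\begin{equation*}
\frac{1}{\sigma(B(w,r))}\int_{B(w,r)}|f-h|^2\,d\sigma\lesssim\|H_f\|^2,
\end{equation*}
which is the required inequality.

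The main obstacle is controlling $|S(\cdot,w_r)|$ on $B(w,r)$ in both directions: the non-vanishing of $g$ in a neighborhood of $B(w,r)$ is needed so that $h$ is a genuine holomorphic approximant rather than merely meromorphic, and the matching two-sided size estimate is needed to convert the weighted integral into the unweighted average appearing in the theorem. Both properties follow from the Fefferman--Boutet de Monvel asymptotic expansion of the Szegő kernel on a smoothly bounded strongly pseudoconvex domain, together with the comparability of $|\Phi(\zeta,w)|$ with the square of the Koranyi distance. For $r$ larger than the diameter of $\partial\Omega$ the inequality is trivial with $h\equiv 0$ since $f\in L^2(\partial\Omega)$, so only the small-$r$ regime requires these fine kernel estimates.
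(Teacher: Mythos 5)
Your proposal is correct and follows the same skeleton as the paper's proof: test $H_f$ on a peaked holomorphic function $g$ concentrated at Koranyi scale $r$ near $w$, read off the approximant as $h=P(fg)/g$, and convert $\|H_fg\|_{L^2}^2=\int|f-h|^2|g|^2\,d\sigma$ into the unweighted average via a pointwise lower bound $|g|^2\gtrsim \|g\|_{H^2}^2/\sigma(B(w,r))$ on $B(w,r)$. The difference is the choice of $g$. The paper takes $g_z(\zeta)=d(z)^{n/2}/v(z,\zeta)^n$ with $v(z,\zeta)=\Phi(z,\zeta)-\rho(z)$, built from the Henkin--Ramirez support function: the defining inequality $2\re\Phi\ge-\rho(\zeta)-\rho(z)+\delta|\zeta-z|^2$ makes $v$ explicitly zero-free on $\overline{\Omega}$ and gives the size estimates $|v|\sim d(z)+d(\zeta)+d(z',\zeta')$ and $\int_{\partial\Omega}|v(z,\zeta)|^{-n-\alpha}d\sigma(\zeta)\lesssim d(z)^{-\alpha}$ by elementary means, uniformly in $r$. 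You instead take $g=S(\cdot,w_r)$, which makes the normalization $\|g\|_{H^2}^2=S(w_r,w_r)$ exact but forces you to invoke the Fefferman/Boutet de Monvel--Sj\"ostrand expansion to get the non-vanishing of $g$ near $B(w,r)$ (so that $h$ is genuinely holomorphic rather than meromorphic) and the matching lower bound, and this only works for small $r$ --- you correctly isolate this as the crux and dispose of large $r$ with $h\equiv 0$. So both arguments are sound; the paper's choice of kernel buys a more elementary and self-contained proof that needs nothing beyond the standard estimates on $\Phi$ from Section 2, while yours trades that for a cleaner normalization at the cost of deep Szeg\H{o}-kernel asymptotics. (Your parametrization $-\rho(w_r)\asymp r^2$ versus the paper's $\widetilde{w}=w-rv^{(1)}$ is only a relabelling of the Koranyi radius and is immaterial, since the statement quantifies over all $r>0$.)
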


\begin{proof}

 Let $\Phi(z, w)$ be the function introduced in Section 2 and $$v(z, w)=\Phi(z, w)-\rho(z).$$ It follows that $|v(z, w)|\thicksim d(z)+d(w)+d(z', w')$.  Set $$g_z(\zeta)=\frac{d(z)^{n/2}}{v(z, \zeta)^n}, \ \ \ \ \  z\in \Omega, \ \zeta\in \overline{\Omega}.$$ Since $v(z, \zeta)$ is holomorphic in $\zeta$ for fixed $z$, and the well-known estimate
$$\int_{\partial\Omega}\frac{d\sigma(\zeta)}{|v(z, \zeta)|^{n+\alpha}}\lesssim \frac{1}{d(z)^\alpha}$$
for $\alpha>0$, we have $\sup\limits_{z\in \Omega}\|g_z\|_{H^2}\le C$ for some constant $C>0$. If $H_f$ is bounded, then $\sup\limits_{z\in \Omega}\|H_f (g_z)\|_{L^2(\partial\Omega)}\le C\|H_f\|$. It follows that
{\setlength\arraycolsep{2pt}
\begin{eqnarray*}\infty &>&\sup\limits_{z\in \Omega}\int_{\partial\Omega}|fg_z-P(fg_z)|^2d\sigma
\\ &=& \sup\limits_{z\in \Omega}\int_{\partial\Omega}\biggl|f(\zeta)-\frac{1}{g_z(\zeta)}P(fg_z)(\zeta)\biggr|^2|g_z(\zeta)|^2d\sigma(\zeta).
\end{eqnarray*}}

Let $w\in \partial\Omega$ and $B(w, r)$ be a ball on $\partial\Omega$ in the Koranyi distance. Then for $\zeta\in B(w, r)$ and $\widetilde{w}=w-rv^{(1)}$, where $\{v^{(1)}, \cdots, v^{(n)}\}$ is the orthonormal basis at $w$ with $v^{(1)}$ transversal to the boundary, we see that $$|g_{\widetilde{w}}(\zeta)|^2=\frac{d(\widetilde{w})^{n}}{|v(\widetilde{w}, \zeta)|^{2n}}\gtrsim \frac{1}{\sigma(B(w, r))}.$$
Since $\frac{1}{g_{\widetilde{w}}(\zeta)}P(fg_{\widetilde{w}})(\zeta)$ is holomorphic on $\partial\Omega$, we may choose a  holomorphic function $h$ on $B(w, r)$ satisfying
$$\frac{1}{\sigma(B(w, r))}\int_{B(w, r)}|f(\zeta)-h(\zeta)|^2d\sigma(\zeta)$$
 $$\lesssim\int_{\partial\Omega}\biggl|f(\zeta)-\frac{1}{g_{\widetilde{w}}(\zeta)}P(fg_{\widetilde{w}})(\zeta)\biggr|^2|g_{\widetilde{w}}(\zeta)|^2d\sigma(\zeta).$$
As a consequence, we obtain the desired inequality.
\end{proof}

\section{Hankel operators on  Hardy spaces of general pseudoconvex domains} \label{Sec4}

Let $\Omega$ be a bounded domain with $C^2$ boundary in $\mathbb C^n$. A positive measure $d\mu$ on $\Omega$ is called a Carleson measure if
there exists a constant $C>0$ such that
$$
\int_\Omega |g|^2 d\mu \le C \int_{\partial \Omega} |g|^2 d\sigma,\ \ \ \forall\,g\in H^2(\Omega).
$$ The starting point is the following result due to Berndtsson:
\medskip
\\
{\bf Theorem D} [cf. \cite{Berndtsson01}, see also \cite{ChenFu11}]\label{th:Berndtsson}.
{\it Let $\Omega$ be a bounded pseudoconvex domain with $C^2$ boundary in $\mathbb C^n$ and let $\rho$ be a negative $C^2$ function on $\Omega$. Suppose there exists a $C^2$ psh function $\psi$ on $\Omega$ which is continuous on $\overline{\Omega}$ and satisfies
\begin{equation}\label{eq:curvature}
\Theta:=(-\rho)\partial\bar{\partial}\psi +\partial\bar{\partial}\rho>0.
\end{equation}
Let $u_0$ be the $L^2_\psi(\Omega)-$minimal solution of the equation $\bar{\partial}u=v$. Then
$$
\int_{\partial \Omega} |u_0|^2e^{-\psi}d\sigma/|\nabla \rho| \le \int_\Omega |v|^2_\Theta e^{-\psi} dV.
$$}

Applying this result, we shall  give the following characterization for the boundedness of Hankel operators.

\begin{theorem}\label{th:general}
Let $\Omega$ be a bounded pseudoconvex domain with $C^2$ boundary in $\mathbb C^n$ and let $f\in L^2(\partial \Omega)$. Suppose the following conditions hold:
\begin{enumerate}
\item there exist  a negative $C^2$ function $\rho$ on $\Omega$ and  a $C^2$ psh function $\psi$ on $\Omega$ which is continuous on $\overline{\Omega}$ such that $(\ref{eq:curvature})$ holds.
\item there is a decomposition $f=f_1+f_2$, where $f_1\in L^\infty(\partial \Omega)$ and $f_2$ admits a Stokes extension $\tilde{f}_2$ to $\Omega$ such that $|\bar{\partial} \tilde{f}_2|^2_{\Theta} dV$ is a Carleson measure on $\Omega$.
\end{enumerate}
Then $H_f:H^2(\Omega)\rightarrow L^2(\partial \Omega)$ is bounded.
\end{theorem}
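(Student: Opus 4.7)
The plan is to adapt the proof of Theorem 1.1, replacing the Amar--Bonami estimate (Theorem A) with Berndtsson's estimate (Theorem D). Write $f = f_1 + f_2$ as in hypothesis (2). The $L^\infty$ part is trivial: since multiplication by $f_1$ is bounded on $L^2(\partial\Omega)$, the operator $H_{f_1} = (I-P)M_{f_1}P$ is bounded. It therefore suffices to prove that $H_{f_2}$ is bounded from $H^2(\Omega)$ into $L^2(\partial\Omega)$.

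Fix $g \in H^\infty(\Omega)$ and form the $(0,1)$-form $v := g\cdot\overline{\partial}\widetilde{f_2}$ on $\Omega$; it is $\overline{\partial}$-closed because $g$ is holomorphic and $\overline{\partial}^{\,2}=0$. Applying Theorem D with the defining function $\rho$ and weight $\psi$ furnished by hypothesis (1) yields the $L^2_\psi$-minimal solution $u_0$ of $\overline{\partial} u_0 = v$ with
$$\int_{\partial\Omega} |u_0|^2 e^{-\psi}\,\frac{d\sigma}{|\nabla\rho|} \le \int_\Omega |g|^2\,|\overline{\partial}\widetilde{f_2}|^2_\Theta\, e^{-\psi}\,dV.$$
Since $\psi\in C(\overline{\Omega})$, $e^{-\psi}$ is pinched between two positive constants, and $|\nabla\rho|$ is bounded above and below on a neighborhood of $\partial\Omega$ because $\rho$ is $C^2$ with $d\rho\ne 0$ there. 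The Carleson-measure hypothesis on $|\overline{\partial}\widetilde{f_2}|^2_\Theta\, dV$ then gives, via the Carleson embedding,
$$\|u_0\|_{L^2(\partial\Omega)}^2 \lesssim \int_\Omega |g|^2\,|\overline{\partial}\widetilde{f_2}|^2_\Theta\, dV \lesssim \|g\|_{H^2}^2.$$

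The last step is to identify $u_0|_{\partial\Omega}$ with $H_{f_2}g$ modulo $H^2(\Omega)$. By Proposition 2.3 applied to the Stokes extension $\widetilde{f_2}$, one has $\overline{\partial}_b(H_{f_2}g) = g\cdot\overline{\partial}\widetilde{f_2}$ on $\partial\Omega$; the boundary trace of Berndtsson's $u_0$ is another solution of the same $\overline{\partial}_b$-equation, so the two solutions differ by the boundary value of an element of $H^2(\Omega)$. Since $H_{f_2}g = f_2 g - P(f_2 g) \in H^2(\Omega)^\perp$, it is the orthogonal projection of $u_0|_{\partial\Omega}$ onto $H^2(\Omega)^\perp$, whence
$$\|H_{f_2}g\|_{L^2(\partial\Omega)} \le \|u_0\|_{L^2(\partial\Omega)} \lesssim \|g\|_{H^2}.$$
Density of $H^\infty(\Omega)$ in $H^2(\Omega)$ then extends the bound to all of $H^2(\Omega)$. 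The main obstacle I anticipate is verifying that Proposition 2.3 and its precursor Lemma 2.2 remain valid in the generality of bounded pseudoconvex domains with merely $C^2$ boundary, rather than strongly pseudoconvex domains with smooth boundary; this should reduce to checking that the underlying duality via Stokes' theorem and the $L^2$-boundedness of the Szeg\"o projection go through under the present hypotheses, and that Berndtsson's minimal solution has an admissible boundary trace in the sense needed to equate $\overline{\partial}_b$-solutions.
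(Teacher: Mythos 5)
Your proof is correct and follows essentially the same route as the paper's: apply Berndtsson's Theorem D to $v=g\cdot\overline{\partial}\widetilde{f_2}$, remove the weights $e^{-\psi}$ and $1/|\nabla\rho|$ using the continuity of $\psi$ on $\overline{\Omega}$, invoke the Carleson hypothesis, and use the minimality of $H_{f_2}g$ among solutions of the $\overline{\partial}_b$-equation. You are in fact somewhat more careful than the paper about the identification step, since the paper simply asserts that $H_{f_2}g$ is the $L^2(\partial\Omega)$-minimal solution of $\overline{\partial}_b u=g\cdot\overline{\partial}\widetilde{f_2}$ without addressing whether Proposition 2.3 extends to general pseudoconvex $C^2$ domains, a point you rightly flag.
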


\begin{proof}
Clearly  $H_{f_1}:H^2(\Omega)\rightarrow L^2(\partial \Omega)$ is bounded. For given $g\in H^2(\Omega)$, $H_{f_2} g$ is nothing but the $L^2(\partial \Omega)-$minimal solution of the equation
$$
\bar{\partial}_b u=\bar{\partial}(\tilde{f}_2g)=g\cdot\bar{\partial}\tilde{f}_2.
$$
 It follows from Theorem D that
\begin{eqnarray*}
\int_{\partial \Omega} |H_f g|^2 d\sigma/|\nabla \rho| & \le & \int_{\partial \Omega} |u_0|^2 d\sigma/|\nabla \rho| = \lim_{r\rightarrow 1-} (1-r)\int_\Omega |u_0|^2 (-\rho)^{-r} dV\\
& \lesssim &   \lim_{r\rightarrow 1-} (1-r)\int_\Omega |u_0|^2 (-\rho)^{-r} e^{-\psi}dV\\
& \lesssim & \int_{\partial \Omega} |u_0|^2e^{-\psi}d\sigma/|\nabla \rho|\\
&\lesssim &  \int_\Omega |g|^2 |\bar{\partial} \tilde{f}_2|^2_\Theta e^{-\psi} dV\\
& \lesssim &  \int_\Omega |g|^2  |\bar{\partial}\tilde{f}_2|^2_\Theta   dV
\lesssim \int_{\partial \Omega} |g|^2 d\sigma,
\end{eqnarray*}
since $  |\bar{\partial}\tilde{f}_2|^2_\Theta dV$ is a Carleson measure. The proof is complete.
\end{proof}

Let $\Omega$ be a bounded strongly pseudoconvex domain in $\mathbb C^n$ and let $\rho\ge -1/e^2$ be a definition function which is strictly psh on $\overline{\Omega}$.  Given $\varepsilon>0$, let
$$
\psi=-[-\log(-\rho)]^{-\varepsilon}.
$$
A straightforward calculation shows that if $\varepsilon\le 1/2$ then
\begin{eqnarray*}
\partial\bar{\partial} \psi & = & \varepsilon [-\log(-\rho)]^{-\varepsilon-1} \partial\bar{\partial} [-\log(-\rho)]\\
&& -\varepsilon(1+\varepsilon) [-\log(-\rho)]^{-\varepsilon-2}\partial \log(-\rho)\wedge \bar{\partial}\log(-\rho)\\
& \ge &  \frac{\varepsilon}4 [-\log(-\rho)]^{-\varepsilon-1} \partial\bar{\partial} [-\log(-\rho)]\\
& \ge &  \frac{\varepsilon}4 [-\log(-\rho)]^{-\varepsilon-1} \rho^{-2}\partial\rho\wedge \bar{\partial} \rho,
\end{eqnarray*}
so that
\begin{eqnarray*}
\Theta & = & (-\rho)\partial\bar{\partial}\psi+\partial\bar{\partial}\rho\\
&\ge& C\left[\partial\bar{\partial}\rho+ \frac{\partial\rho\wedge \bar{\partial}\rho}{(-\rho) [-\log(-\rho)]^{1+\varepsilon}}\right].
\end{eqnarray*}
Set $N=|\bar{\partial} \rho|^{-1}\sum_j \frac{\partial \rho}{\partial \bar{z}_j}\cdot \frac{\partial}{\partial\bar{z}_j}$. For any $(0,1)-$form $\omega$ on $\Omega$, we set
$$
\omega_N=\langle \omega,N\rangle \cdot \frac{\bar{\partial}\rho}{|\bar{\partial}\rho|}\ \ \ \text{and}\ \ \ \omega_T=\omega-\omega_N.
$$
It follows that $|\bar{\partial} \tilde{f}_2|^2_\Theta dV$ is a Carleson measure provided that
$
|(\bar{\partial} \tilde{f}_2)_T|^2 dV
$
 and
$$
(-\rho) [-\log(-\rho)]^{1+\varepsilon}  |(\bar{\partial} \tilde{f}_2)_N|^2 dV
$$
are Carleson measures. Consequently, we obtain

\begin{corollary}
Let $\Omega$ be a bounded strongly pseudoconvex domain in $\mathbb C^n$ and $\rho$ a strictly psh defining function on $\Omega$. Let $f\in L^2(\partial \Omega)$. Suppose $f=f_1+f_2$, where $f_1\in L^\infty(\partial \Omega)$ and $f_2$ admits a Stokes extension $\tilde{f}_2$ such that
$
|(\bar{\partial} \tilde{f}_2)_T|^2 dV
$
 and
$$
(-\rho) [-\log(-\rho)]^{1+\varepsilon}  |(\bar{\partial} \tilde{f}_2)_N|^2 dV
$$
are Carleson measures for some $\varepsilon>0$. Then $H_f: H^2(\Omega)\rightarrow L^2(\partial \Omega)$ is bounded.
\end{corollary}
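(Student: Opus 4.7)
The plan is to apply Theorem \ref{th:general} with a carefully chosen weight $\psi$, and then to split the pointwise Hermitian norm $|\bar{\partial}\tilde{f}_2|^2_\Theta$ into a tangential piece controlled by $|(\bar{\partial}\tilde{f}_2)_T|^2$ and a normal piece controlled by $(-\rho)[-\log(-\rho)]^{1+\varepsilon}|(\bar{\partial}\tilde{f}_2)_N|^2$. After rescaling $\rho$ so that $\rho\ge -1/e^2$ on $\overline{\Omega}$ (which leaves the Carleson conditions invariant), I would take
$$
\psi=-[-\log(-\rho)]^{-\varepsilon_0}
$$
for some $0<\varepsilon_0\le \min\{1/2,\varepsilon\}$. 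The direct computation presented just before the statement of the corollary then yields the curvature bound
$$
\Theta\ge C\left[\partial\bar{\partial}\rho+\frac{\partial\rho\wedge\bar{\partial}\rho}{(-\rho)[-\log(-\rho)]^{1+\varepsilon_0}}\right],
$$
so that hypothesis (\ref{eq:curvature}) of Theorem \ref{th:general} is verified.

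Next, I would estimate the dual norm $|\omega|^2_\Theta$ of an arbitrary $(0,1)$-form $\omega$ in a collar of $\partial\Omega$. Writing $\omega=\omega_T+\omega_N$ with $\omega_N$ parallel to $\bar{\partial}\rho/|\bar{\partial}\rho|$, I would pick a local unitary frame in which $\partial\bar{\partial}\rho$ is close to a positive diagonal matrix that is uniformly bounded below by strong pseudoconvexity, while the second term $\frac{\partial\rho\wedge\bar{\partial}\rho}{(-\rho)[-\log(-\rho)]^{1+\varepsilon_0}}$ contributes a single large eigenvalue of order $[(-\rho)[-\log(-\rho)]^{1+\varepsilon_0}]^{-1}$ in the $\bar{\partial}\rho$-direction. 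Inverting the resulting block-diagonal matrix and absorbing the cross terms with Cauchy-Schwarz in the Hermitian form $\Theta$, one obtains
$$
|\omega|^2_\Theta\lesssim |\omega_T|^2+(-\rho)[-\log(-\rho)]^{1+\varepsilon_0}|\omega_N|^2,
$$
where the constants depend only on $\rho$ and $\Omega$.

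Applied to $\omega=\bar{\partial}\tilde{f}_2$, this inequality tells me that $|\bar{\partial}\tilde{f}_2|^2_\Theta\, dV$ is pointwise dominated by a sum of the two measures assumed to be Carleson in the hypothesis (using that $[-\log(-\rho)]^{1+\varepsilon_0}\le [-\log(-\rho)]^{1+\varepsilon}$ throughout $\overline{\Omega}$ thanks to the normalization $-\rho\le 1/e^2$). Consequently $|\bar{\partial}\tilde{f}_2|^2_\Theta\, dV$ is itself a Carleson measure on $\Omega$, and Theorem \ref{th:general} immediately gives the boundedness of $H_{f_2}:H^2(\Omega)\to L^2(\partial\Omega)$. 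Since $H_{f_1}$ is trivially bounded because $f_1\in L^\infty(\partial\Omega)$, summing yields the desired boundedness of $H_f$.

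The principal obstacle is the pointwise comparison in the second step: the Hermitian form $\Theta$ is highly anisotropic near $\partial\Omega$, blowing up like $[(-\rho)[-\log(-\rho)]^{1+\varepsilon_0}]^{-1}$ in the complex normal direction while remaining bounded tangentially, so the dual-norm computation must be carried out in a frame adapted to $\partial\rho$, with care taken to absorb the mixed terms $\langle \omega_T,\omega_N\rangle_\Theta$ against the two diagonal contributions. Everything else in the argument is bookkeeping: once this anisotropic norm estimate is in place, Theorem \ref{th:general} does the analytic work.
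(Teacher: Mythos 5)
Your proposal is correct and follows essentially the same route as the paper: the paper's proof is precisely the computation displayed just before the corollary, namely taking $\psi=-[-\log(-\rho)]^{-\varepsilon}$, deriving the lower bound on $\Theta$, and splitting $\bar{\partial}\tilde{f}_2$ into tangential and normal parts before invoking Theorem \ref{th:general}. Your only additions are welcome bookkeeping the paper leaves implicit — replacing $\varepsilon$ by $\varepsilon_0=\min\{1/2,\varepsilon\}$ so the curvature computation is valid, and spelling out the anisotropic dual-norm estimate $|\omega|^2_\Theta\lesssim|\omega_T|^2+(-\rho)[-\log(-\rho)]^{1+\varepsilon_0}|\omega_N|^2$.
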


Similar ideas work for more general domains, e.g. pseudoconvex domains of finite D'Angelo type (compare \cite{ChenFu11}, Proposition 5.2).

In the  setting of strongly pseudoconvex domains, the theory of the $\overline{\partial}$-operator has been used to investigate Hankel operators $H_f$  on the Bergman space $A^2(\Omega)$. The boundedness, compactness, essential norms and Schatten ideals of $H_f$ have been completely characterized (see  [20-22] and \cite{ASS00}). In the proofs of these results,  an essential tool is the integral representation of a solution to the $\overline{\partial}$-equation.

To conclude  this section, we shall apply the $L^2$-estimates of $\overline{\partial}$-equation due to  Donnelly-Fefferman \cite{DF83} to give a new proof for  the boundedness of $H_f$  on  $A^2(\Omega)$.

Let $\Omega$ be a bounded pseudoconvex domian  and let $\psi$ be a $C^2$ plurisubharmonic function on $\Omega$.  For any $(0, 1)$ form $f$, let $$|f|^2_{\partial\overline{\partial}\psi}:=\sum\limits_{i, j=1}^n \psi^{i \, \overline{j}} f_i \overline{f_j},$$
where $(\psi^{i\, \overline{j}})=(\psi_{i \,\overline{j}})^{-1}$. According to an estimate of Donnelly and  Fefferman, one can  solve $\overline{\partial}u=f$ with the following estimate
$$\int_{\Omega}|u|^2e^{-\varphi}dV\le C_0\int_\Omega|f|^2_{\partial\overline{\partial}\psi}e^{-\varphi}dV$$
for some numerical constant $C_0$, whenever $\psi$ satisfies $\partial\overline{\partial}\psi\ge \partial \psi\wedge \overline{\partial}\psi$  and $\varphi$ is plurisubharmonic.

We first consider the case of the unit ball.
\begin{theorem}\label{thm 4.3}  Let $f\in L^2(B_n)\cap C^1(B_n)$ and  $\rho=|z|^2-1$. If $$|\rho\cdot \overline{\partial} f|+|\rho|^{1/2}|\overline{\partial} f\wedge \overline{\partial}\rho| \le C$$ for a constant $C>0$.
Then the Hankel operator $H_f=(I-P_B)M_f$ is bounded from $A^2(B_n)$ into $L^2(B_n)$, here $P_B$ is the Bergman projection.
\end{theorem}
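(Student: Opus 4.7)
The plan is to realize $H_f g$ as the $L^2(B_n)$-minimal solution of a $\overline{\partial}$-equation and then apply the Donnelly--Fefferman estimate with a carefully chosen weight $\psi$. For $g \in A^2(B_n)$, holomorphy of $g$ gives $\overline{\partial}(fg) = g \cdot \overline{\partial} f$, so $H_f g = fg - P_B(fg)$ is the $L^2(B_n)$-minimal solution (equivalently, the solution orthogonal to $A^2(B_n)$) of $\overline{\partial} u = g \cdot \overline{\partial} f$. Producing any solution $u_0$ of this equation with $\|u_0\|_{L^2(B_n)} \lesssim \|g\|_{A^2(B_n)}$ therefore yields $\|H_f g\|_{L^2(B_n)} \le \|u_0\|_{L^2(B_n)}$ and the desired boundedness.

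The weight I would take is $\psi = -\log(-\rho) = -\log(1-|z|^2)$, together with the trivial plurisubharmonic weight $\varphi \equiv 0$. A direct computation gives
\begin{equation*}
\partial\overline{\partial}\psi - \partial\psi \wedge \overline{\partial}\psi \;=\; \frac{\partial\overline{\partial}\rho}{-\rho} \;=\; \frac{\sum_i dz_i \wedge d\bar z_i}{1-|z|^2} \;\ge\; 0,
\end{equation*}
so the curvature hypothesis of the Donnelly--Fefferman theorem is satisfied. The estimate then produces a solution of $\overline{\partial} u = g \cdot \overline{\partial} f$ with
\begin{equation*}
\|u\|_{L^2(B_n)}^2 \;\le\; C_0 \int_{B_n} |g|^2 \,|\overline{\partial} f|^2_{\partial\overline{\partial}\psi}\, dV,
\end{equation*}
so the proof reduces to the pointwise bound $|\overline{\partial} f|^2_{\partial\overline{\partial}\psi} \lesssim 1$ on $B_n$.

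To establish this pointwise bound, I would note that $\partial\overline{\partial}\psi$ is, up to the factor $1/(n+1)$, the Bergman metric of $B_n$. Inverting the Hermitian matrix $\psi_{i\bar j} = \delta_{ij}/(1-|z|^2) + \bar z_i z_j/(1-|z|^2)^2$ by Sherman--Morrison gives $\psi^{i\bar j} = (1-|z|^2)(\delta_{ij} - \bar z_i z_j)$, from which one obtains
\begin{equation*}
|\omega|^2_{\partial\overline{\partial}\psi} \;=\; (-\rho)\,|\omega_T|^2 + \rho^2\,|\omega_N|^2
\end{equation*}
for any $(0,1)$-form $\omega = \omega_T + \omega_N$ decomposed orthogonally relative to $\overline{\partial}\rho$. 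The two hypotheses are precisely tuned to this splitting: $|\rho \cdot \overline{\partial} f| \le C$ yields $\rho^2 |(\overline{\partial} f)_N|^2 \le C^2$, while $|\rho|^{1/2}|\overline{\partial} f \wedge \overline{\partial}\rho| \le C$, together with $|\overline{\partial} f \wedge \overline{\partial}\rho| = |\overline{\partial}\rho|\,|(\overline{\partial} f)_T|$ and $|\overline{\partial}\rho| = |z|$ bounded below near $\partial B_n$, yields $(-\rho)|(\overline{\partial} f)_T|^2 \lesssim 1$. On compact subsets of $B_n$ the quantity $|\overline{\partial} f|^2_{\partial\overline{\partial}\psi}$ is bounded by continuity of $f \in C^1(B_n)$, so the pointwise bound holds uniformly.

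The main obstacle is the careful matching between the inverse Bergman metric and the two hypotheses on $f$. The inverse metric damps by only $\rho^2$ in the complex normal direction, which is why the stronger hypothesis $|\rho \cdot \overline{\partial} f| \le C$ is imposed (it controls the full gradient, including the normal component); in the complex tangential directions the inverse metric damps by the larger factor $|\rho|$, so the weaker hypothesis $|\rho|^{1/2}|\overline{\partial} f \wedge \overline{\partial}\rho| \le C$ suffices to absorb the tangential contribution. This calibration is exactly what makes the Donnelly--Fefferman estimate close up, and it is the heart of the argument.
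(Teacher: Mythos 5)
Your proposal is correct and follows essentially the same route as the paper: realize $H_fg$ as the minimal solution of $\overline{\partial}u=g\cdot\overline{\partial}f$, apply Donnelly--Fefferman with $\psi=-\log(-\rho)$, and compute the inverse metric $\psi^{i\bar j}=(1-|z|^2)(\delta_{ij}-\bar z_iz_j)$. The only cosmetic difference is that you write $|\overline{\partial}f|^2_{\partial\overline{\partial}\psi}$ via the tangential/normal splitting (which forces the extra remark that $|\overline{\partial}\rho|=|z|$ is bounded below near the boundary), whereas the paper uses the equivalent identity $|\omega|^2_{\partial\overline{\partial}\psi}=\rho^2|\omega|^2+|\rho|\,|\omega\wedge\overline{\partial}\rho|^2$, which matches the hypotheses verbatim.
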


\begin{proof}
Take $\psi=-\log(-\rho)$. A calculation shows that
$$\partial\overline{\partial}\psi =- \frac{\partial\overline{\partial}\rho}{\rho}+\frac{\partial\rho\wedge\overline{\partial}\rho}{\rho^2}\ge \partial\psi\wedge \bar{\partial}\psi$$
and $$ \partial\overline{\partial}\psi=\frac{\frac{1}{|z|^2}\sum\limits_{i, j=1}^n\overline{z_i}z_jdz_i\wedge d\overline{z_j}}{(1-|z|^2)^2}
+\frac{\sum\limits_{i=1}^ndz_i\wedge d\overline{z_j}-\frac{1}{|z|^2}\sum\limits_{i, j=1}^n\overline{z_i}z_jdz_i\wedge d\overline{z_j}}{1-|z|^2}.$$
Write $\partial\overline{\partial}\psi=\sum\psi_{i \, \overline{j}}dz_i\wedge d\overline{z_j}$.  Let $A=\frac{1}{|z|^2}(\overline{z_i}z_j)$ and   $B=I-A$, where $I$ is the identity matrix. It is easy to check  $A^2=A$ and $AB=0$. Thus we get
{\setlength\arraycolsep{2pt}
\begin{eqnarray*}\sum\psi^{i \, \overline{j}}dz_i\wedge d\overline{z_j}&=&(1-|z|^2)^2\frac{1}{|z|^2}\sum\limits_{i, j=1}^n\overline{z_i}z_jdz_i\wedge d\overline{z_j}
\\ && +
(1-|z|^2)\biggl(\sum\limits_{i=1}^ndz_i\wedge d\overline{z_j}-\frac{1}{|z|^2}\sum\limits_{i, j=1}^n\overline{z_i}z_jdz_i\wedge d\overline{z_j}\biggr).
\end{eqnarray*}}Now, for $\overline{\partial} u=\sum\limits_{i=1}^n \frac{\partial u}{\partial\overline{z_i}}d\overline{z_i}$, we compute that
{\setlength\arraycolsep{2pt}
\begin{eqnarray*}
|\overline{\partial} u|^2_{\partial\overline{\partial}\psi}&=&(1-|z|^2)^2\frac{1}{|z|^2}\sum\limits_{i, j=1}^n\overline{z_i}z_j\frac{\partial u}{\partial\overline{z_i}}\overline{\frac{\partial u}{\partial\overline{z_j}}}
\\ && +
(1-|z|^2)\biggl(\sum\limits_{i=1}^n\biggl|\frac{\partial u}{\partial\overline{z_i}}\biggr|^2-\frac{1}{|z|^2}\sum\limits_{i, j=1}^n\overline{z_i}z_j\frac{\partial u}{\partial\overline{z_i}}\overline{\frac{\partial u}{\partial\overline{z_j}}}\biggr)
 \\ &=& -(1-|z|^2)\sum\limits_{i, j=1}^n\overline{z_i}z_j\frac{\partial u}{\partial\overline{z_i}}\overline{\frac{\partial u}{\partial\overline{z_j}}}
+(1-|z|^2)|z|^2 \sum\limits_{i=1}^n\biggl|\frac{\partial u}{\partial\overline{z_i}}\biggr|^2
\\ && + (1-|z|^2)^2\sum\limits_{i=1}^n\biggl|\frac{\partial u}{\partial\overline{z_i}}\biggr|^2
\\ &=& (1-|z|^2)^2\sum\limits_{i=1}^n\biggl|\frac{\partial u}{\partial\overline{z_i}}\biggr|^2 + (1-|z|^2)\biggl(|z|^2 \sum\limits_{i=1}^n\biggl|\frac{\partial u}{\partial\overline{z_i}}\biggr|^2-\sum\limits_{i, j=1}^n\overline{z_i}z_j\frac{\partial u}{\partial\overline{z_i}}\overline{\frac{\partial u}{\partial\overline{z_j}}}\biggr)
\\ & =&(1-|z|^2)^2|\overline{\partial} u|^2+(1-|z|^2)\biggl|\overline{\partial} u\wedge\overline{\partial}|z|^2\biggr|^2
\\ & =&\rho^2|\overline{\partial} u|^2+|\rho|\cdot|\overline{\partial} u\wedge\overline{\partial}\rho|^2.
\end{eqnarray*}}Hence, for any solution $u$ orthogonal to $A^2(B_n)$,  the estimate of Donnelly and Fefferman yields that
{\setlength\arraycolsep{2pt}
\begin{eqnarray*}
\int_{B_n}|u|^2dV&\le & C_0\int_{B_n}|\overline{\partial} u|^2_{\partial\overline{\partial}\psi}dV
\\ &=& C_0 \int_{B_n}\biggl(\rho^2|\overline{\partial} u|^2+|\rho|\cdot|\overline{\partial} u\wedge\overline{\partial}\rho|^2\biggr)dV.
\end{eqnarray*}}

Next, for any $g\in A^2(B_n)$, consider the equation
$$\overline{\partial} u =\overline{\partial} (fg)=g\cdot \overline{\partial} f.$$
It is obvious that $g\cdot \overline{\partial} f$ is $\overline{\partial}$-closed and $u=(I-P)(fg)=H_f g$ is the minimal solution to this equation. Thus,  the above discussion gives that
{\setlength\arraycolsep{2pt}
\begin{eqnarray*}
\int_{B_n}|H_f g|^2dV&\le &C_0\int_{B_n}|g\cdot \overline{\partial} f |^2_{\partial\overline{\partial}\psi}dV
\\ &=&C_0 \int_{B_n}|g|^2\biggl(|\rho\cdot \overline{\partial} f|^2+|\rho|\cdot|\overline{\partial} f\wedge\overline{\partial}\rho|^2\biggr)dV
\\ &\le &C_0C^2 \int_{B_n}|g|^2dV,
\end{eqnarray*}}since  $|\rho\cdot \overline{\partial} f|+|\rho|^{1/2}|\overline{\partial} f\wedge \overline{\partial}\rho| \le C$.
It follows that $H_f$  is bounded from $A^2(B_n)$ into $L^2(B_n)$. Moreover, we obtain the following norm estimate
$$\|H_f\|\le C_0\sup\limits_{z\in B_n}\biggl(|\rho\cdot \overline{\partial} f|^2+|\rho|\cdot|\overline{\partial} f\wedge\overline{\partial}\rho|^2\biggr)^{1/2}.$$
So the proof is complete.
\end{proof}

As pointed out on page 89 of \cite{AC00}, if $f$ is a $(0, 1)$ form defined in the strongly pseudoconvex domain $\Omega=\{\rho<0\}$, we also have
$$|f|^2_{\partial\overline{\partial}\log(1/-\rho)}=\frac{1}{B}\biggl(\rho^2|f|^2_\beta+|\rho| \cdot|f\wedge \overline{\partial}\rho|^2_\beta\biggr),$$
where $B=-\rho+|\partial\rho|_\beta\thicksim 1$ and $|\cdot |_\beta$ denotes the norm induced by the metric form $\beta=\frac{i}{2}\partial\overline{\partial}\rho$, which is equivalent to the Euclidean metric, since $\rho$ is strictly plurisubharmonic. Therefore, using  similar ideas as  Theorem 4.3, we get the following result.

\begin{theorem}\label{thm 4.4}  Let $\Omega$ be a strongly pseudoconvex domain with smooth boundary and $f\in L^2(\Omega)\cap C^1(\Omega)$. If $$|\rho\cdot \overline{\partial} f|+|\rho|^{1/2}|\overline{\partial} f\wedge \overline{\partial}\rho| \le C$$ for a constant $C>0$.
Then the Hankel operator $H_f=(I-P_B)M_f$ is bounded from $A^2(\Omega)$ into $L^2(\Omega)$.
\end{theorem}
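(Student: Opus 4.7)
\medskip
\noindent\textbf{Proof proposal for Theorem 4.4.} The plan is to repeat the argument of Theorem 4.3 verbatim, replacing the explicit computation that was specific to the unit ball by the intrinsic formula for $|\cdot|^2_{\partial\bar\partial\log(1/-\rho)}$ quoted from \cite{AC00} just before the statement. First I would set $\psi=-\log(-\rho)=\log(1/(-\rho))$ on $\Omega$ and check, exactly as in the ball case, that
\[
\partial\bar\partial\psi=-\frac{\partial\bar\partial\rho}{\rho}+\frac{\partial\rho\wedge\bar\partial\rho}{\rho^{2}}\ \ge\ \partial\psi\wedge\bar\partial\psi,
\]
which is the curvature hypothesis needed to apply the Donnelly--Fefferman $L^{2}$-estimate. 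Since $\rho$ is strictly plurisubharmonic on $\overline\Omega$, the first term on the right is already positive, and the required inequality follows at once; in particular no plurisubharmonic weight beyond $\psi$ is needed (one may take $\varphi\equiv 0$ in the Donnelly--Fefferman estimate).

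Next, for any $g\in A^{2}(\Omega)$, the form $g\cdot\bar\partial f$ is $\bar\partial$-closed and
\[
u:=H_{f}g=(I-P_{B})(fg)
\]
is a solution of $\bar\partial u=g\cdot\bar\partial f$ which is orthogonal to $A^{2}(\Omega)$, hence it is the $L^{2}(\Omega)$-minimal solution. Applying the Donnelly--Fefferman estimate and then the identity
\[
|\omega|^{2}_{\partial\bar\partial\log(1/-\rho)}=\frac{1}{B}\Bigl(\rho^{2}|\omega|^{2}_{\beta}+|\rho|\cdot|\omega\wedge\bar\partial\rho|^{2}_{\beta}\Bigr),\qquad B=-\rho+|\partial\rho|_{\beta}\thicksim 1,
\]
recorded in the remark, I would obtain
\[
\int_{\Omega}|H_{f}g|^{2}dV\ \lesssim\ \int_{\Omega}|g\cdot\bar\partial f|^{2}_{\partial\bar\partial\psi}dV\ \lesssim\ \int_{\Omega}|g|^{2}\Bigl(\rho^{2}|\bar\partial f|^{2}_{\beta}+|\rho|\cdot|\bar\partial f\wedge\bar\partial\rho|^{2}_{\beta}\Bigr)dV.
\]
Since $\beta=\tfrac{i}{2}\partial\bar\partial\rho$ is equivalent to the Euclidean metric on $\overline\Omega$ by strict plurisubharmonicity of $\rho$, the inner bracket is comparable to $|\rho\cdot\bar\partial f|^{2}+|\rho|\cdot|\bar\partial f\wedge\bar\partial\rho|^{2}$, which by hypothesis is bounded by $C^{2}$ on $\Omega$. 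Therefore
\[
\int_{\Omega}|H_{f}g|^{2}dV\ \lesssim\ C^{2}\int_{\Omega}|g|^{2}dV,
\]
and $H_{f}:A^{2}(\Omega)\to L^{2}(\Omega)$ is bounded with norm controlled by $C$.

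The only point that requires a little care, and which I expect to be the main (mild) obstacle, is verifying that $\psi=-\log(-\rho)$ satisfies $\partial\bar\partial\psi\ge\partial\psi\wedge\bar\partial\psi$ globally on $\Omega$: near $\partial\Omega$ this is forced by the $\rho^{-2}\partial\rho\wedge\bar\partial\rho$ term in $\partial\bar\partial\psi$, but at interior points where $\rho$ is bounded away from $0$ one must rely on $\partial\bar\partial\rho>0$ to absorb $\partial\psi\wedge\bar\partial\psi$. This is routine once the strict plurisubharmonicity of $\rho$ (and hence a uniform lower bound for $\partial\bar\partial\rho$ on $\overline\Omega$) is used. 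Aside from this, the identity taken from \cite{AC00} together with the equivalence of $\beta$ and the Euclidean metric reduces everything to the same scheme as Theorem 4.3, and no new machinery is needed.
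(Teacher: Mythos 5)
Your proposal is correct and follows essentially the same route as the paper, which proves Theorem 4.4 simply by invoking the argument of Theorem 4.3 together with the identity $|\omega|^{2}_{\partial\bar\partial\log(1/-\rho)}=\frac{1}{B}\bigl(\rho^{2}|\omega|^{2}_{\beta}+|\rho|\,|\omega\wedge\bar\partial\rho|^{2}_{\beta}\bigr)$ from \cite{AC00} and the equivalence of $\beta$ with the Euclidean metric. (Your closing worry is unnecessary: $\partial\bar\partial\psi-\partial\psi\wedge\bar\partial\psi=-\partial\bar\partial\rho/\rho\ge 0$ holds identically on all of $\Omega$, so no absorption argument is needed anywhere.)
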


This result  is just Theorem 3.5 of \cite{Li94}, where the authors also showed  that this condition is necessary.  Now, we once again find  the relationship between  Hankel operators and $\overline{\partial}$-theory. Moreover,  using the inequality
$$\|H_f g\|^2_{A^2(\Omega)}\lesssim\int_{\Omega}|g|^2\biggl(|\rho\cdot \overline{\partial} f|^2+|\rho|\cdot|\overline{\partial} f\wedge\overline{\partial}\rho|^2\biggr)dV,$$
 we may easily deduce the compactness of $H_f$ on $A^2(\Omega)$ as Theorem 3.6 in \cite{Li94}.

\section{Proof of Theorem 1.2}\label{Sec5}

$(1)\Rightarrow (2)$. For any $z\in D$, the normalized reproducing kernel $k_z\in H^2(D)$ and $\|k_z\|_{H^2(D)}=1$.  So we have
$$\frac{1}{2\pi}\int |f k_z-P(f k_z)|^2d\theta=\|H_f(k_z)\|^2_{H^2(D)}\le C_1$$
for a positive constant $C_1$. It is easy to see that $\frac{1}{k_z} P(f k_z)\in H^2(D)$ and $$P_z(\theta)=\frac{1-|z|^2}{|e^{i\theta}-z|^2}=|k_z|^2.$$ Thus  we get
{\setlength\arraycolsep{2pt}
\begin{eqnarray*}
&& \inf\limits_{h\in H^2}\frac{1}{2\pi}\int|f-h|^2P_z(\theta)d\theta  \le  \frac{1}{2\pi}\int |f -\frac{1}{k_z}P(f k_z)|^2|k_z|^2d\theta
\\ && =\frac{1}{2\pi}\int |f k_z-P(f k_z)|^2d\theta\le C_1,
\end{eqnarray*}}which implies (2).

$(2)\Rightarrow (3)$. There exists a constant $M>0$, such that $F(z)\le M$ for any $z\in D$. For  $z=0$, we have
$$F(0)= \inf\limits_{h\in H^2}\frac{1}{2\pi}\int|f-h|^2d\theta\le M.$$
Choose $h_0 \in H^2(D)$ which satisfies $$\frac{1}{2\pi}\int|f-h_0|^2d\theta=\inf\limits_{h\in H^2}\frac{1}{2\pi}\int|f-h|^2d\theta.$$
It is clear that $h_0$ is the orthogonal projection of $f$ onto $H^2(D)$ and $f-h_0$ is conjugate analytic. Moreover, we have $f(0)-h_0(0)=0$. Hence,
$$|\nabla (f-h_0)|^2=2|\partial(f-h_0)/\partial \overline{z}|^2=2|\partial f/\partial \overline{z}|^2.$$
Combining this with  the Littlewood-Paley indetity (see, p. 230), we obtain
$$\frac{1}{2\pi}\int|f-h_0|^2d\theta = \frac{1}{\pi}\iint\limits_D|\nabla (f-h_0)|^2\log\frac{1}{|z|}dxdy$$
$$= \frac{2}{\pi}\iint\limits_D\biggl|\frac{\partial f}{\partial \overline{z}}\biggr|^2\log\frac{1}{|z|}dxdy\le M.\eqno(5.1)$$

For fixed  $z_0\in D$, let $h_1$ be chosen to attain the infimum in (1.1) with respect $P_{z_0}(\theta)d\theta$. Applying similar arguments as in the proof of Theorem 3.5 on page 372 of \cite{Gar81}, we see   that $f-h_1$ is conjugate analytic and $f(z_0)-h_1(z_0)=0$, because in the Hilbert space $L^2(P_{z_0}(\theta)d\theta)$, the function $h_1$ is the orthogonal projection of $f$ onto $H^2(D)$. This means
$$|\nabla (f-h_1)|^2=2|\partial(f-h_1)/\partial \overline{z}|^2=2|\partial f/\partial \overline{z}|^2.$$
Thus the Littlewood-Paley indetity gives
$$\frac{1}{2\pi}\int|f-h_1|^2P_{z_0}(\theta)d\theta =\frac{2}{\pi}\iint\limits_D\biggl|\frac{\partial f}{\partial \overline{z}}\biggr|^2\log\biggl|\frac{1-\overline{z_0}z}{z-z_0}\biggr|dxdy\le M.\eqno(5.2) $$
Notice that
$$(1-|z|^2)\frac{1-|z_0|^2}{|1-\overline{z_0}z|^2}=1-\biggl|\frac{z-z_0}{1-\overline{z_0}z}\biggr|^2\le 2\log\biggl|\frac{1-\overline{z_0}z}{z-z_0}\biggr|.$$
Combining this inequality  with (5.2), we have a constant $C_2>0$ such that
{\setlength\arraycolsep{2pt}
\begin{eqnarray*}
&&\iint\limits_{|z|>1/4} \frac{1-|z_0|^2}{|1-\overline{z_0}z|^2}\cdot\biggl|\frac{\partial f}{\partial \overline{z}}\biggr|^2\log\frac{1}{|z|}dxdy
\\ && \le C_2\iint\limits_{|z|>1/4} \biggl|\frac{\partial f}{\partial \overline{z}}\biggr|^2(1-|z|^2)\frac{1-|z_0|^2}{|1-\overline{z_0}z|^2}dxdy
\\ && \le 2C_2\iint\limits_D \biggl|\frac{\partial f}{\partial \overline{z}}\biggr|^2\log\biggl|\frac{1-\overline{z_0}z}{z-z_0}\biggr|dxdy\le C_2M\pi.
\end{eqnarray*}}On the other hand, we infer from   (5.1) that
{\setlength\arraycolsep{2pt}
\begin{eqnarray*}
&&\iint\limits_{|z|\le 1/4} \frac{1-|z_0|^2}{|1-\overline{z_0}z|^2}\cdot\biggl|\frac{\partial f}{\partial \overline{z}}\biggr|^2\log\frac{1}{|z|}dxdy
\\ && \le C_3\iint\limits_{|z|\le 1/4} \biggl|\frac{\partial f}{\partial \overline{z}}\biggr|^2\log\frac{1}{|z|}dxdy
\le C_3M\pi.
\end{eqnarray*}}for a constant $C_3>0$. All these arguments yield that
$$\sup\limits_{z_0\in D}\iint\limits_D \frac{1-|z_0|^2}{|1-\overline{z_0}z|^2}\cdot\biggl|\frac{\partial f}{\partial \overline{z}}\biggr|^2\log\frac{1}{|z|}dxdy\le C$$
for   a positive constant $C$. In view of  Lemma 3.3 in Chapter VI of  \cite{Gar81}, we see that $|\frac{\partial f}{\partial \overline{z}}|^2\log\frac{1}{|z|}dxdy$ is a Carleson measure.

$(3)\Rightarrow (1)$. For any $g\in H^2(D)$, we see that $fg-P(fg)$ is conjugate analytic and $f(0)g(0)-P(fg)(0)=0$, because $(H^2(D))^\perp=\overline{H^2(D)}$. If $|\frac{\partial f}{\partial \overline{z}}|^2\log\frac{1}{|z|}dxdy$ is a Carleson measure, then
 the Littlewood-Paley identity implies
{\setlength\arraycolsep{2pt}
\begin{eqnarray*}
\|H_f(g)\|^2_{H^2(D)}&=& \frac{1}{2\pi}\int|fg-P(fg)|^2d\theta
\\ &=& \frac{1}{\pi}\iint\limits_D \biggl|\nabla(fg-P(fg))\biggr|^2\log\frac{1}{|z|}dxdy
\\ &=& \frac{2}{\pi}\iint\limits_D |g|^2\cdot\biggl|\frac{\partial f}{\partial \overline{z}}\biggr|^2\log\frac{1}{|z|}dxdy
\lesssim \|g\|^2_{H^2(D)}.
\end{eqnarray*}}Thus, $H_f$ is bounded on $H^2(D)$. This completes the proof of Theorem 1.2. \qquad \qquad  $\Box$

\medskip

{\bf Remark.} Recall that $f\in BMO(\partial D)$ if and only if
$$d\lambda_f=|\nabla f(z)|^2 \log\frac{1}{|z|}dxdy$$
is a Carleson measure by Theorem 3.4 of \cite{Gar81}. Let $$d\nu_f=\biggl|\frac{\partial f}{\partial \overline{z}}\biggr|^2\log\frac{1}{|z|}dxdy.$$
Since $|\nabla f(z)|^2=2(|\partial f/\partial \overline{z}|^2+|\partial f/\partial z|^2)$ and $\overline{\partial f/\partial z}=\partial \overline{f}/\partial \overline{z}$, we have $\lambda_f=2(\nu_f+\nu_{\overline{f}})$. In particular, when $f$ is real-valued, $\lambda_f$
and $\nu_f$ are equivalent. Thus, Theorem 1.2 implies a well-known result, that is, $H_f$ is bounded on $H^2(D)$ if and only if $f\in BMO(\partial D)$ for real-valued $f$.

On the other hand, observing Theorem 1.2 and  the corresponding result on the Bergman space $A^2(\Omega)$, we have the following conjecture for the boundedness of  Hankel operators on  $H^2(\Omega)$.
\medskip
\\
{\bf Conjecture 2.}
{\it Suppose that $\Omega$ is a bounded  strongly pseudoconvex domain with smooth boundary and $f\in L^2(\partial\Omega)$. Then the Hankel operator $H_f$ is bounded from $H^2(\Omega)$ into $L^2(\partial\Omega)$,
if and only if  $f$  admits a decomposition $f=f_1+f_2$ with $f_1\in L^\infty(\partial\Omega)$ and $f_2$ having  an extension   $\widetilde{f_2}$ in $\Omega$, which satisfies that
 $|\overline{\partial}\widetilde{f_2}|dV$ and $\frac{1}{\sqrt{-\rho}}|\overline{\partial}\widetilde{f_2}\wedge\overline{\partial}\rho|dV$ are  Carleson measures in $\Omega$.}

\section{$H_f$ and $H_{\overline{f}}$ on Hardy spaces of strongly pseudoconvex domains}\label{Sec6}

For the unit disc $D$,  it is well known that for  $f\in L^2(\partial D)$,  both $H_f$ and $H_{\overline{f}}$ are bounded on  $H^2(D)$ if and only if $f\in BMO(\partial D)$. This result can be generalized to  the unit ball  \cite{Xia04} and  the key tool is the following inequality:
$$\|H_f k_z\|^2_2+\|H_{\overline{f}} k_z\|^2_2\ge \|f k_z\|^2_2+|\langle f k_z, k_z\rangle|^2,$$
where $k_z$ is the normalized reproducing kernel, which  can be proved by using  automorphisms of the ball. In this section, we denote the norm  $\|\cdot\|_2:=\|\cdot\|_{H^2}$.

Let us consider the corresponding result on the  strongly pseudoconvex domain $\Omega$. Since there might be  no nontrivial holomorphic automorphisms for general strongly pseudoconvex domains in $\mathbb{C}^n$, we borrow some ideas from  \cite{BL95} to obtain the following.

\begin{theorem}\label{thm 6.1}  Let $\Omega$ be a strongly pseudoconvex domain with smooth boundary and   $f\in L^2(\partial\Omega)$. Suppose that for each fixed $z\in \Omega$, $1/S(\cdot, z)$ is holomorphic and bounded in $\Omega$.
Then the Hankel operators both $H_f$  and $H_{\overline{f}}$ are bounded from $H^2(\Omega)$ into $L^2(\partial\Omega)$ if and only if $f$ lies in $BMO(\partial\Omega)$.
\end{theorem}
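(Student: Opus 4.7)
The forward direction is immediate from Theorem 1.1: if $f\in BMO(\partial\Omega)$ then so is $\overline{f}$, and Theorem 1.1 applied to both $f$ and $\overline{f}$ yields boundedness of $H_f$ and $H_{\overline{f}}$. For the converse I would use the Poisson--Szeg\"{o} characterization of $BMO(\partial\Omega)$ recorded in Section 2, namely
$$\|f\|_{BMO}^2\sim\sup_{z\in\Omega}\int_{\partial\Omega}|f(\zeta)-\widetilde{f}(z)|^2P(z,\zeta)\,d\sigma(\zeta)=\sup_{z\in\Omega}\|(f-\widetilde{f}(z))k_z\|_{L^2(\partial\Omega)}^2,$$
where $k_z=S(\cdot,z)/\sqrt{S(z,z)}$ is the normalized Szeg\"{o} kernel (a unit vector in $H^2(\Omega)$ with $|k_z|^2=P(z,\cdot)$). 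The orthogonal decomposition
$$(f-\widetilde{f}(z))k_z=A_z+H_f k_z, \qquad A_z:=P(fk_z)-\widetilde{f}(z)k_z\in H^2(\Omega),$$
gives $\|(f-\widetilde{f}(z))k_z\|^2=\|A_z\|^2+\|H_f k_z\|^2$. Since $\|H_f k_z\|\le\|H_f\|$, my target becomes the uniform estimate $\|A_z\|\le\|H_{\overline{f}}k_z\|\le\|H_{\overline{f}}\|$.

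The hypothesis $1/S(\cdot,z)\in H^\infty(\Omega)$ enters decisively here: because $1/k_z=\sqrt{S(z,z)}/S(\cdot,z)$ is holomorphic and bounded, the quotient $g_z:=P(fk_z)/k_z$ lies in $H^2(\Omega)$, and a direct reproducing-kernel computation gives $g_z(z)=\widetilde{f}(z)$. Hence $A_z=(g_z-g_z(z))k_z$ vanishes at $z$. I would then rewrite
$$\|A_z\|^2=\langle A_z,(f-\widetilde{f}(z))k_z\rangle=\int_{\partial\Omega}(g_z-g_z(z))\,\overline{f}\,|k_z|^2\,d\sigma,$$
the constant piece dropping out because $\langle A_z,k_z\rangle=A_z(z)/\sqrt{S(z,z)}=0$. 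Splitting $\overline{f}k_z=P(\overline{f}k_z)+H_{\overline{f}}k_z$, the contribution from $P(\overline{f}k_z)$ is the pairing $\langle(g_z-g_z(z))P(\overline{f}k_z),k_z\rangle$, and since $(g_z-g_z(z))P(\overline{f}k_z)\in H^1(\Omega)$ (product of two $H^2$ functions) and vanishes at $z$, the Szeg\"{o} reproducing formula on $H^1$ forces this pairing to be zero. Cauchy--Schwarz on what remains yields $\|A_z\|^2\le\|A_z\|\cdot\|H_{\overline{f}}k_z\|$, i.e.\ $\|A_z\|\le\|H_{\overline{f}}k_z\|$, closing the plan.

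The principal obstacle is exactly the step invoking $1/S(\cdot,z)\in H^\infty$: without this, $g_z=P(fk_z)/k_z$ is not a well-defined $H^2$ function, and one loses the representation $A_z=(g_z-g_z(z))k_z$ that allows the reproducing-kernel killing of the $P(\overline{f}k_z)$-term. A secondary technicality, easily handled, is the validity of the Szeg\"{o} reproducing property on $H^1(\Omega)$ at an interior point $z$; this follows from smoothness of $S(z,\cdot)$ on $\partial\Omega$ for fixed $z\in\Omega$ together with density of $H^2$ in $H^1$. Apart from these, the argument is a clean computation built on the orthogonal decomposition $L^2(\partial\Omega)=H^2(\Omega)\oplus H^2(\Omega)^\perp$ and the $BMO$ characterization via the Poisson--Szeg\"{o} integral already available in Section 2.
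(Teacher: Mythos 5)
Your proof is correct, and both directions match the paper's in substance, but your converse is organized genuinely differently. The paper expands the product $H_f(k_z)\cdot H_{\overline{f}}(k_z)$ into four terms, pairs it against the unimodular function $b_z=S(\cdot,z)/\overline{S(\cdot,z)}$, evaluates each term via the reproducing property to obtain the identity $\langle H_f(k_z),b_z\overline{H_{\overline{f}}(k_z)}\rangle=\|H_f(k_z)\|_2^2+\|H_{\overline{f}}(k_z)\|_2^2-\|\overline{f}k_z\|_2^2+|\widetilde{f}(z)|^2$, and then applies Cauchy--Schwarz to get $\int_{\partial\Omega}|f-\widetilde{f}(z)|^2P(z,\cdot)\,d\sigma\le\frac{3}{2}\bigl(\|H_f(k_z)\|_2^2+\|H_{\overline{f}}(k_z)\|_2^2\bigr)$. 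You instead use the Pythagorean decomposition $(f-\widetilde{f}(z))k_z=A_z+H_fk_z$ and bound the analytic part $A_z$ alone by $\|H_{\overline{f}}k_z\|_2$, which yields the same conclusion with the sharper constant $1$. The essential ingredients are identical in both arguments: the hypothesis $1/S(\cdot,z)\in H^\infty(\Omega)$ is used to divide by $k_z$ (the paper needs $k_z^{-1}$ to evaluate $\langle P(fk_z)P(\overline{f}k_z),b_z\rangle$; you need it to form $g_z=P(fk_z)/k_z$ and the unimodular factor $\overline{k_z}/k_z$), and both proofs apply the Szeg\"o reproducing formula to a product of two $H^2$ functions, i.e.\ to an $H^1$ function --- a point the paper passes over silently and you correctly flag as needing the density of $\mathcal{O}(\overline{\Omega})$ in $H^1(\Omega)$ together with continuity of point evaluation. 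Your version buys a cleaner separation of the roles of $H_f$ (which handles the anti-analytic part by Pythagoras) and $H_{\overline{f}}$ (which controls $A_z$), at the cost of losing the symmetric polarization identity that the paper's computation produces and that more directly mirrors the inequality of Xia and Beatrous--Li quoted at the start of Section 6.
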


\begin{proof}
Let $S(\zeta, z)$ be the Szeg\"{o} kernel and $k_z(\zeta)=\frac{S(\zeta, z)}{\sqrt{S(z, z)}}$
be the normalized kernel. For fixed $\zeta\in \partial\Omega$, the function $S(z, \zeta)$ is holomorphic in $\Omega$.
Let $b_z(\zeta)=\frac{S(\zeta, z)}{S(z, \zeta)}$. Then for any $z\in \Omega$, we have
$$\langle H_f(k_z), b_z \overline{H_{\overline{f}}(k_z)}\rangle=\langle H_f(k_z)H_{\overline{f}}(k_z), b_z\rangle.$$

Note  that
{\setlength\arraycolsep{2pt}
\begin{eqnarray*}
H_f(k_z)H_{\overline{f}}(k_z) &=& (I-P)(f k_z)(I-P)(\overline{f}k_z)
\\ &=& |f|^2k^2_z-fk_zP(\overline{f}k_z)-\overline{f}k_zP(fk_z)+P(fk_z)P(\overline{f}k_z).
\end{eqnarray*}}Since $k_z^2 \, \overline{b_z}=|k_z|^2$, we obtain $$\langle |f|^2k_z^2, b_z\rangle=\|f k_z\|^2_2.$$
From $\overline{k_z}\, b_z=k_z$, we see that
$$\langle fk_z P(\overline{f}k_z), b_z\rangle=\langle  P(\overline{f}k_z), \overline{fk_z}\, b_z\rangle=\langle  P(\overline{f}k_z), \overline{f}k_z\rangle=\|P( \overline{f} k_z)\|^2_2$$ and similarly
$$\langle \overline{f}k_z P(fk_z), b_z\rangle=\|P(f k_z)\|^2_2.$$

Since  $1/S(\cdot, z)$ is holomorphic and bounded in $\Omega$ for each fixed $z\in \Omega$, the function $k_z^{-1}$ is well defined.
Using the reproducing property of the Szeg\"{o} kernel, we have
{\setlength\arraycolsep{2pt}
\begin{eqnarray*}\langle P(fk_z) P(\overline{f}k_z), b_z\rangle&=&\langle P(fk_z) P(\overline{f}k_z)k_z^{-1}, k_z\rangle
\\ &=&P(fk_z)(z) P(\overline{f}k_z)(z) S(z, z)^{-1}.
 \end{eqnarray*}}Now, $ P(z, \zeta)=\frac{S(\zeta, z) S(z, \zeta)}{S(z, z)}$ is the Poisson-Szeg\"{o} kernel. Thus, we get
{\setlength\arraycolsep{2pt}
\begin{eqnarray*}
P(fk_z)&=&\sqrt{S(z,z)}\int_{\partial\Omega} f(\zeta)\frac{S(\zeta, z) S(z, \zeta)}{S(z, z)} d\sigma(\zeta)
\\ &=& \sqrt{S(z,z)}\int_{\partial\Omega}f(\zeta) P(z,\zeta) d\sigma(\zeta)=\sqrt{S(z,z)} \widetilde{f}(z),
\end{eqnarray*}}It follows that
$$\langle P(fk_z) P(\overline{f}k_z), b_z\rangle=\widetilde{f}(z)\widetilde{\overline{f}}(z)=|\widetilde{f}(z)|^2$$
because of $\widetilde{\overline{f}}=\overline{\widetilde{f}}$. Therefore, the above discussions show that
$$\langle H_f(k_z), b_z \overline{H_{\overline{f}}(k_z)}\rangle=\|f k_z\|^2_2-\|P(f k_z)\|^2_2-\|P( \overline{f} k_z)\|^2_2+|\widetilde{f}(z)|^2.$$

Recall that $\|H_f(k_z)\|^2_2=\|f k_z\|^2_{H^2}-\|P(f k_z)\|^2_2$ and $\|H_{\overline{f}}(k_z)\|^2_2=\|\overline{f} k_z\|^2_2-\|P(\overline{f} k_z)\|^2_2$. This means
$$\langle H_f(k_z), b_z \overline{H_{\overline{f}}(k_z)}\rangle=\|H_f(k_z)\|^2_2+\|H_{\overline{f}}(k_z)\|^2_2-\|\overline{f} k_z\|^2_2+|\widetilde{f}(z)|^2.$$
Applying this identity and the Schwarz inequality, we obtain
$$\|\overline{f} k_z\|^2_2-|\widetilde{f}(z)|^2\le \frac{3}{2}\biggl(\|H_f(k_z)\|^2_2+\|H_{\overline{f}}(k_z)\|^2_2\biggr).\eqno(6.1)$$

On the other hand, in Section 2, we have talked about  an  equivalent norm for $BMO$, i.e. $f\in BMO(\partial\Omega)$ if and only if
$$\sup\limits_{z\in \Omega}\int_{\partial\Omega}|f(\zeta)-\widetilde{f}(z)|^2P(z, \zeta)d\sigma(\zeta)<\infty.$$
In fact, $P(z, \zeta)=|k_z(\zeta)|^2$, so we get
{\setlength\arraycolsep{2pt}
\begin{eqnarray*}&&\int_{\partial\Omega}|f(\zeta)-\widetilde{f}(z)|^2P(z, \zeta)d\sigma(\zeta)
\\ & =&\int_{\partial\Omega}|f(\zeta)|^2P(z, \zeta)d\sigma(\zeta)- \overline{\widetilde{f}(z)}\int_{\partial\Omega}f(\zeta)P(z, \zeta)d\sigma(\zeta)
\\ && -\widetilde{f}(z)\int_{\partial\Omega}\overline{f(\zeta)}P(z, \zeta)d\sigma(\zeta)+|\widetilde{f}(z)|^2\int_{\partial\Omega}P(z, \zeta)d\sigma(\zeta)
\\ & =&\|\overline{f} k_z\|^2_2-|\widetilde{f}(z)|^2.
\end{eqnarray*}}As a consequence, if both $H_f$  and $H_{\overline{f}}$ are bounded, then  (6.1) gives
$$\int_{\partial\Omega}|f(\zeta)-\widetilde{f}(z)|^2P(z, \zeta)d\sigma(\zeta)\le  \frac{3}{2}\biggl(\|H_f(k_z)\|^2_2+\|H_{\overline{f}}(k_z)\|^2_2\biggr)<\infty.$$
This implies that $f\in BMO(\partial\Omega)$.

For another direction, Theorem 1.1 gives the desired result and the proof is complete.
\end{proof}

\end{document}